\newlength{\hchng}
\newlength{\vchng}
\newtheorem{thm}{Theorem}[section]
\newtheorem{prop}[thm]{Proposition}
\newtheorem{lemma}[thm]{Lemma}
\newtheorem{defn}[thm]{Definition}
\newtheorem{preremark}[thm]{Remark}
\newenvironment{remark}{\begin{preremark}\rm}{\medskip \end{preremark}}
\numberwithin{equation}{section}
\newcommand{\R}{\mathbb R}
\begin{document}
\date{}
\title[Long-time solvability  of  NSB equations]{Long-time solvability  of the Navier-Stokes-Boussinesq equations
with almost periodic initial large data}
\author{Slim Ibrahim  and Tsuyoshi Yoneda}
\email{ibrahim@math.uvic.ca} \urladdr{
http://www.math.uvic.ca/~ibrahim/}
\thanks{S. I. is partially supported by NSERC\# 371637-2009 grant and a start up fund from University
of Victoria}
\thanks{T. Y. is partially supported by PIMS Post-doc fellowship at the University
of Victoria, and partially supported by NSERC\# 371637-2009}
\maketitle
\begin{center}
Department of Mathematics and Statistics,
University of Victoria \\
PO Box 3060 STN CSC,
Victoria, BC, Canada, V8W 3R4 \\
\end{center}
\begin{center}
and
\end{center}
\begin{center}
Department of Mathematics, Hokkaido University\\ 
Sapporo 060-0810, Japan
\end{center}
\bibliographystyle{plain}

\vskip0.3cm \noindent {\bf Keywords:}
Navier-Stokes equation, Boussinesq approximation, almost periodic functions

\vskip0.3cm \noindent {\bf Mathematics Subject Classification:}
76D50,42B05

    \begin{abstract}
We investigate large time existence of solutions of the Navier-Stokes-Boussinesq equations with
 spatially almost periodic large  data when the density stratification is sufficiently large.
In 1996,  Kimura and Herring \cite{KH}  examined numerical simulations to show a stabilizing effect due to the stratification.
 They observed scattered two-dimensional pancake-shaped vortex patches lying almost in the horizontal plane.
Our result is a mathematical justification of the presence of such two-dimensional pancakes.
  To show the existence of solutions for large times,
we use $\ell^1$-norm of amplitudes.
 Existence for large times is then proven using techniques of fast singular oscillating limits and
bootstrapping argument from a global-in-time unique solution of the system of limit equations.
\end{abstract}


\section{Introduction}

Large-scale fluids such as atmosphere and ocean are parts of geophysical fluids, and the Coriolis force due to the earth rotation plays a significant role in the large scale flows considered in meteorology and geophysics.\\ 
Mathematically, it was first investigated by Poincar\'e \cite{Po}. Later on, the problem of strong Coriolis force was extensively studied. Babin, Mahalov and Nicolaenko (BMN) \cite{BMN1,BMN2} studied the incompressible rotating Navier-Stokes and Euler equations in the periodic case while Chemin, Desjardins, Gallagher and Grenier \cite{CDGG} analyzed the case of decaying data and more recently, the second author \cite{Y} considered the almost periodic case. Gallagher in \cite{Ga} studied a more abstract parabolic system. We also refer to Paicu \cite{P} for anisotropic viscous fluids, Benameur, Ibrahim and Majdoub \cite{BIM} for rotating Magneto-Hydro-Dynamic system and to Gallagher and Saint-Raymond \cite{GSR} for inhomogeheous rotating fluid equations.\\
Moreover on the one hand, the case when fluids are governed by both a strong Coriolis force and vertical stratification effects was investigated by BMN in \cite{BMN3} in the periodic setting and Charve in \cite{C} for decaying data. However, their studies do not cover the case when fluid equations are governed by the only effect of stratification.
It is known that a strong Coriolis force has a stabilizing effect (see \cite{BMN1}). However, in
BMN \cite[Section 9.2]{BMN4} the authors observed that for ideal fluids (i.e., with zero viscosity), the only effect of stratification leads to unbalanced dynamics. Moreover, the case of both strong Coriolis and stratification forces in the almost periodic setting seems to remain open. Finally, note that for the almost periodic case, energy type estimates cannot be used, and instead Fujita-Kato's approach has to be used.
On the other hand,
 Kimura and Herring \cite{KH}
 examined numerical simulations to show a stabilizing effect due to the effect of stratification for  viscous fluid.  They observed scattered two-dimensional pancake-shaped vortex patches lying almost in the horizontal plane.
Our result can be seen as a mathematical justification of the presence of such two-dimensional pancakes.
More precisely, we study long-time solvability for  Navier-Stokes-Boussinesq  equation
with stratification effects.
The Navier-Stokes-Boussinesq equations with stratification effects are governed by the following equations.
\begin{equation}\label{eq11}
\begin{cases}
\partial_t u -\nu\Delta u + (u \cdot  \nabla) u   +\nabla
p= g\rho e_3&x \in \mathbb{R}^3,\quad t>0\\
\partial_t \rho -\kappa\Delta \rho +(u \cdot \nabla) \rho  = -\mathcal{N}^2 u_3& x \in \mathbb{R}^3,\quad t>0\\
\nabla \cdot u=0&x\in \mathbb{R}^3,\quad t>0\\
u|_{t=0}=u_0 { \ },\quad \rho|_{t=0}=\rho_0
\end{cases}
\end{equation}
where the unknown functions $u = u(x, t) = (u_1,u_2,u_3)$, $\rho = \rho(x, t)$ and $p = p(x, t)$ are the fluid velocity,
 the thermal disturbances and the pressure, respectively. The parameters $\nu > 0$, $\kappa \geq 0$ and $g >0$ are the viscosity,
  the  thermal diffusivity and the gravity force, respectively. The parameter $\mathcal{N} > 0$ is  Brunt-V\"{a}is\"{a}l\"{a} frequency
  (stratification-parameter). Recall that  $\Delta :=(\partial_1^2+\partial_2^2+\partial_3^2)I_3$, $\nabla :=(\partial_1,\partial_2,\partial_3)$
  and $e_3:=(0,0,1)$.

Our method follows the ideas based on BMN. For the  limit
equations, we show that it is equivalent
to the 2D-Navier-Stokes equations\footnote{in the sense that there
is a one to one correspondence between solutions of the two
equations}, which is known to have, in the almost periodic setting, a unique global solution, see for example
\cite{GMY}. Then, we show that the global existence for the remainder equations in the limit equations.
 Since we handle not only periodic functions, we have to introduce a new analytic functional
setting, which is more suitable for the almost periodic situation as the second author did for the rotating fluid case in \cite{Y}. More
precisely, a straightforward application of an energy inequality is
impossible if the initial data is almost periodic. To overcome this
difficulty, we use $\ell^1$-norm of amplitudes with sum closed
frequency set. We recall the analytic functional  setting (see \cite{Y}) as follows:
\begin{defn} (Countable sum closed frequency set.)
\label{countable sum}
A countable set $\Lambda$ in $\mathbb{R}^3$ is called a sum closed frequency set if it satisfies the following properties:
$$
\Lambda=\{a+b:a,b\in\Lambda\}
\quad
 and
\quad -\Lambda=\Lambda.
$$
\end{defn}

\begin{remark}\
If  $\{e_j\}_{j=1}^3$ is the standard orthogonal basis in
$\mathbb{R}^3$, then the sets $\mathbb{Z}^3$, $\{m_1e_1+\sqrt
2m_2e_2 +m_3e_3:m_1,\cdots,m_3\in\mathbb{Z}\}$ and
$\{m_1e_1+m_2(e_1+e_2\sqrt 2)+m_3(e_2+e_3\sqrt
3):m_1,m_2,m_3\in\mathbb{Z}\}$ are examples of such countable sum
closed frequency sets. Clearly, the case $\mathbb Z^3$ corresponds to the periodic.
Each of the   other two cases are dense in $\mathbb{R}^3$ and therefore they correspond to ``purely" almost periodic setting.
\end{remark}
\begin{defn} (An $\ell^1$-type function space)
Let $BUC$ be the space of all bounded uniformly continuous functions defined in $\mathbb{R}^3$ equipped with the $L^\infty$-norm.
For a countable sum closed frequency set $\Lambda\subset\mathbb{R}^3$, let
$$
 X^\Lambda(\mathbb{R}^3):=
 \{u=\sum_{n\in\Lambda}\hat u_ne^{in\cdot x}\in BUC(\mathbb{R}^3): u_{-n}=u^*_n \quad for\quad n\in\Lambda,\
  \|u\|:=\sum_{n\in\Lambda}|u_n|<\infty\},
$$
where $u_n^*$ is the complex  conjugate coefficient of $u_n$.
\end{defn}

The second condition in Definition \ref{countable sum} is needed
to include real-valued almost periodic functions in $X^\Lambda$.

\begin{remark}
Note that functions in $\ell^1$ do not necessarily decay as $x\rightarrow\infty$. Also, this almost periodic setting is in general,
  different from the periodic case since the frequency set may have accumulation points. The almost periodic setting is somehow between the periodic and the full non-decaying cases.  
\end{remark}
Now, we define anisotropic dilation of the frequency set as follows.
\begin{defn}
For $\gamma=(\gamma_1,\gamma_2)\in (0,\infty)^2$, let
\begin{equation}\label{restriction}
\Lambda(\gamma):=\{(\gamma_1 n_1,\gamma_2 n_2,n_3)\in\mathbb{R}^3:(n_1,n_2, n_3)\in\Lambda\}.
\end{equation}
\end{defn}

Now, we specify  the following Quasi-Geostrophic equation (a part of limiting system),
 and assume (for the moment) that it has a scalar global solution $\theta =\theta (t)=\theta(t,x_1,x_2,x_3)$,
 \begin{equation}\label{QG}
\begin{cases}
\partial_t\theta -\Delta_3\theta+(-\Delta_h)^{-1/2}\left[(w\cdot \nabla)\left((-\Delta_h)^{1/2}\theta)\right)  \right]=0,\\
w=\left(-\partial_{x_2}(-\Delta_h)^{-1/2}\theta,\partial_{x_1}(-\Delta_h)^{-1/2}\theta\right)\\
\theta(t)|_{t=0}=\theta_0,
\end{cases}
\end{equation}
where $\Delta_h:=\partial_{x_1}^2+\partial_{x_2}^2$ and
$\Delta_3:=\partial_{x_1}^2+\partial_{x_2}^2+\partial_{x_3}^2$.
We will show that   the initial value problem
 for the QG equation admits a global-in-time unique solution in
  $C([0,\infty): X^\Lambda)$ with the  initial data $\theta_0=-\partial_{x_2}(-\Delta_h)^{-1/2}u_{0,1}+\partial_{x_1}(-\Delta_h)^{-1/2}u_{0,2}$.

\vspace{0.3cm}
More precisely,  we give an explicit  one-to-one correspondence between the QG and a 2D type Navier-Stokes equations
(for the existence of the unique global solution to 2D-Navier-Stokes equation with almost periodic initial data, see \cite{GMY}).
Now we state our main result.
\begin{thm}\label{main}
Let $\Lambda$ be a sum closed frequency set. There exists a set of frequencies dilation factors
 $\Gamma(\Lambda)\subset (0,\infty)^2$ such that\footnote{the complement set $\Gamma^c$  is at most countable.}:\\
for any $\gamma\in\Gamma$,
for any zero-mean value and divergence free initial vector field $u_0\in X^{\Lambda(\gamma)}$,
initial thermal disturbance $\rho_0\in X^{\Lambda(\gamma)}$, 
$\nu>0$, $\kappa\geq 0$  
and $T>0$, there exists
$N_0>g$ depending only on $\nu$, $\kappa$, $u_0$, $\rho_0$ such that if
$|N|>N_0$, then there exists a mild solution to the
equation (\ref{eq11}), $u(t) \in
C([0,T]: X^{\Lambda(\gamma)})$ with zero-mean value and  divergence free, and $\rho(t) \in
C([0,T]: X^{\Lambda(\gamma)})$ .
\end{thm}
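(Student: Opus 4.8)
The plan is to adapt the Babin--Mahalov--Nicolaenko method of fast singular oscillating limits to the $\ell^1$ setting $X^{\Lambda(\gamma)}$. First I would symmetrize the linear coupling by rescaling the density, setting $b:=\tfrac{\sqrt g}{\mathcal N}\rho$, which turns the skew pair $\bigl(g\rho e_3,\,-\mathcal N^2u_3\bigr)$ into $\bigl(\sqrt g\,\mathcal N\,b\,e_3,\,-\sqrt g\,\mathcal N\,u_3\bigr)$. Writing $U=(u,b)$ and applying the Leray projection $\mathbb P$ to remove the pressure and enforce $\nabla\cdot u=0$, the system becomes
\[
\partial_t U + \sqrt{g}\,\mathcal N\,\mathcal L U \;=\; \mathbb A U + B(U,U),
\qquad
\mathcal L U := \bigl(-\mathbb P(b\,e_3),\,u_3\bigr),
\]
with $\mathbb A:=\mathrm{diag}(\nu\Delta,\kappa\Delta)$ and $B(U,U):=\bigl(-\mathbb P(u\cdot\nabla u),\,-(u\cdot\nabla)b\bigr)$. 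The operator $\mathcal L$ is skew-adjoint on divergence-free fields, and on each Fourier mode $n\in\Lambda$ it acts as a block rotation with eigenvalues $0,\pm i\,\omega_n^\gamma$, where $\omega_n^\gamma=|n_h^\gamma|/|n^\gamma|$ is the internal-gravity-wave frequency for the dilated mode $n^\gamma=(\gamma_1n_1,\gamma_2n_2,n_3)$. Consequently $e^{t\sqrt g\,\mathcal N\,\mathcal L}$ is an isometry of $X^{\Lambda(\gamma)}$; combined with the parabolic smoothing of $e^{t\mathbb A}$ (the $\nu>0$ viscosity absorbing the derivative in $B$, the density being transported when $\kappa=0$), a Fujita--Kato fixed point in the time-weighted $X^{\Lambda(\gamma)}$-norm, as in \cite{Y,GMY}, yields a unique mild solution on a maximal interval whose local existence time depends only on $\|U_0\|$ and is \emph{uniform in $\mathcal N$}.

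Next I would pass to the filtered (interaction) variable $V(t):=e^{t\sqrt g\,\mathcal N\,\mathcal L}U(t)$, which eliminates the fast oscillation and satisfies
\[
\partial_t V \;=\; e^{t\sqrt g\,\mathcal N\,\mathcal L}\,\mathbb A\,e^{-t\sqrt g\,\mathcal N\,\mathcal L}V \;+\; e^{t\sqrt g\,\mathcal N\,\mathcal L}\,B\bigl(e^{-t\sqrt g\,\mathcal N\,\mathcal L}V,\,e^{-t\sqrt g\,\mathcal N\,\mathcal L}V\bigr).
\]
In Fourier amplitudes every quadratic interaction over a triad $k+m=n$ then carries an oscillatory phase $e^{i\sqrt g\,\mathcal N\,t(\pm\omega_k^\gamma\pm\omega_m^\gamma\pm\omega_n^\gamma)}$. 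The formal limit $|\mathcal N|\to\infty$ retains only the resonant triads $\pm\omega_k^\gamma\pm\omega_m^\gamma\pm\omega_n^\gamma=0$ together with the time-average of the diffusion, producing a limit system $\partial_t W=\bar{\mathbb A}W+B_{\mathrm{res}}(W,W)$. The set $\Gamma(\Lambda)$ is precisely those $\gamma\in(0,\infty)^2$ for which these frequency-resonance relations admit only the catalytic/vortical solutions; since each excluded relation is a nontrivial real-analytic constraint on $\gamma$, its bad set is at most countable, which is the footnote's claim. For such $\gamma$ the averaged diffusion restricts to the full Laplacian $\Delta_3$ on the slow manifold and $W$ is governed by the limit system whose vortical part is exactly the Quasi-Geostrophic equation (\ref{QG}). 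The zero-mean hypothesis guarantees that the inverse powers of $-\Delta_h$ and the vortical decomposition are well defined.

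By the one-to-one correspondence between (\ref{QG}) and a 2D Navier--Stokes system and the global well-posedness of the latter in $X^\Lambda$ for almost periodic data (cf. \cite{GMY}), the limit problem with $\theta_0=-\partial_{x_2}(-\Delta_h)^{-1/2}u_{0,1}+\partial_{x_1}(-\Delta_h)^{-1/2}u_{0,2}$ has a unique global solution $W\in C([0,\infty):X^{\Lambda(\gamma)})$, furnishing an a priori bound $M:=\sup_{t\ge0}\|W(t)\|<\infty$ that is independent of $T$. Finally I would establish $\|V(t)-W(t)\|\to0$ on $[0,T]$ as $|\mathcal N|\to\infty$ and close by continuation: since $\|U(t)\|=\|V(t)\|\le M+1$ while the solution exists and the local time depends only on the norm, the solution extends to $[0,T]$ once $|\mathcal N|>N_0$, with $N_0$ fixed by $\nu,\kappa,u_0,\rho_0$ (the horizon $T$ entering only through the comparison estimate, the controlling bound $M$ being time-uniform); undoing the filtering and the rescaling, $u$ is the velocity part of $e^{-t\sqrt g\,\mathcal N\,\mathcal L}V$ and $\rho=\tfrac{\mathcal N}{\sqrt g}\,b$, giving the asserted solution in $C([0,T]:X^{\Lambda(\gamma)})$.

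I expect the convergence step to be the main obstacle. In the almost periodic setting $\Lambda(\gamma)$ may have accumulation points, so the non-resonant phases $\pm\omega_k^\gamma\pm\omega_m^\gamma\pm\omega_n^\gamma$ are not bounded away from zero and one cannot integrate by parts in time against a uniform small-divisor lower bound, as one does in the periodic case. The resolution is to exploit the $\ell^1$ structure directly: truncate the amplitude sum to finitely many modes, whose tail is uniformly small in $X^{\Lambda(\gamma)}$, so that on the retained finite set each non-resonant oscillatory integral vanishes as $|\mathcal N|\to\infty$ by a Riemann--Lebesgue argument, whence the total non-resonant contribution is $o(1)$ uniformly on $[0,T]$. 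This $\ell^1$-dominated averaging, rather than any quantitative non-resonance estimate, is what makes the singular limit and the ensuing bootstrap go through.
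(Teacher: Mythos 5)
Your proposal follows essentially the same route as the paper: the same rescaling $b=\tfrac{\sqrt g}{\mathcal N}\rho$ and skew-symmetrization, the same wave/vortical (Craya--Herring) decomposition with frequencies $\omega_n=|n|_h/|n|$, the same definition of $\Gamma(\Lambda)$ by non-vanishing of the resonance relations under the dilation $\gamma$, the same identification of the vortical limit with the QG equation and its equivalence to a 2D Navier--Stokes system, and the same finite-mode truncation to handle the non-resonant oscillatory terms in the almost periodic setting (the paper integrates by parts in time on the truncated set, where $\max|\omega^\sigma_{nkm}|^{-1}$ is finite; your Riemann--Lebesgue argument on finitely many modes is the same mechanism).

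The one genuine gap is in your treatment of the limit system. You derive the a priori bound $M=\sup_{t}\|W(t)\|$ for the \emph{whole} limit solution from the QG/2D-NS correspondence, but that correspondence only controls the vortical component $c^0$. The limit system also contains the wave components $c^{\pm1}$, and their global solvability is a separate matter (it is the content of Lemma \ref{glo}, the longest argument in the paper). For $\gamma\in\Gamma$ the wave--wave--wave resonances are eliminated, so the $c^{\pm1}$ equations become linear in $c^{\pm1}$ with coefficients built from $c^0$ and are therefore globally solvable; but the resulting bound is obtained by Gronwall against $\int_0^t\|c^0(s)\|^2\,ds$ and grows with $t$, so your claim that $M$ is independent of $T$ is not justified (nor is it needed, since $T$ is fixed in the theorem). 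You should state explicitly that for $\gamma\in\Gamma$ the wave part of the limit equations is linear, record the resulting $T$-dependent bound, and only then run the comparison estimate and the bootstrap; as written, the continuation step rests on a bound you have not actually produced.
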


\begin{remark}
For the periodic case, we do not need to restrict the frequency set to $\Gamma$ i.e. we can take $\Gamma(\Lambda)=(0,\infty)^2$.
 However, the computation in this case is more complicated and needs a``restricted convolution"
 type result in the spirit of  \cite{BMN2}.
\end{remark}

\section{Preliminaries}
Before going any further, we first recall the following few facts about the space $X^\Lambda$:

\begin{itemize}
\item $(X^\Lambda, \|\cdot\|)$ is a Banach space, and any almost periodic function $u\in X^\Lambda$ can be decomposed $u(x)=\Sigma_{n\in\Lambda}\hat u_ne^{inx}$,
 where each ``Fourier coefficient" $\hat u_n$ is uniquely determined by
$$
\hat u_n=\lim_{|B|\to\infty}\frac1{|B|}\int_{ B}u(x)e^{ix\cdot n}\;dx,
$$
and ${B}$ stands for a ball in $\R^3$ (see for example \cite{Co}).
\item $X^\Lambda$ is  closed subspace of $FM$, the Fourier preimage of the space of all finite Radon measures
  proposed by
Giga, Inui, Mahalov and Matsui in 2005 (see \cite{GIMM2,GIMS2,GJMY}).
\item Leray projection on almost periodic functions $\bar P=\{\bar P_{jk}\}_{j,k=1,2,3}$ is defined as
\begin{equation*}
\bar P_{jk}:=
\delta_{jk}+R_j R_k\quad (1\leq j,k \leq3)
\end{equation*}
with $\delta_{jk}$ is Kronecker's delta and $R_j$ is  the Riesz transform defined by
\begin{equation*}
R_j=\frac{\partial}{\partial x_j}(-\Delta)^{-\frac{1}{2}} { \ } for { \ } j=1,2,3.
\end{equation*}
The symbol $\sigma(R_j)$ of $R_j$ is $in_j/|n|$, where $i=\sqrt{-1}$ (see \cite{Co}).  Let $P$
 be the extended Leray projection with Fourier-multiplier $P_n=\{P_{n,ij}\}_{i,j=1,2,3,4}$ given by
\begin{equation*}
P_{n,ij}:=
\begin{cases}
\delta_{ij}-\frac{n_in_j}{|n|^2}\quad (1\leq i,j \leq3),\\
\delta_{ij}\quad (otherwise).
\end{cases}
\end{equation*}
\item Helmotz-Leray decomposition is defined on almost periodic functions in the same way as in the periodic case.
Namely, $u$ is uniquely decomposed as
\begin{equation*}
  u=w+\nabla \pi,
\end{equation*}
where $\pi=-(-\Delta)^{-1}\text{div}\ u$ and $w=\bar Pu$.
\end{itemize}

Now we rewrite the system ($\ref{eq11}$) in a more abstract way. Set $N:= \mathcal{N} \sqrt{g}$
 and $v \equiv (v_1,v_2,v_3,v_4):=(u_1,u_2,u_3,\frac{\sqrt{g}}{\mathcal{N}}\rho)$.
 Then $v$ solves 
\begin{equation}\label{eq31}
\begin{cases}
\partial_t v  -\tilde\nu\Delta v + NJ v + \nabla_3 p=-(v\cdot\nabla_3) v\\
v|_{t=0}=v_0\\
\nabla_3 \cdot v = 0,\\
 \end{cases}
\end{equation}
with $\tilde \nu=\hbox{diag}(\nu,\nu,\nu,\kappa)$, the initial data $v_0=(u_{0,1},u_{0,2},u_{0,3}$,
$\frac{ \sqrt{g}}{\mathcal{N}}\rho_0)$,
$\nabla_3:=(\partial_1,\partial_2,\partial_3,0)$,
\begin{equation*}
 J:=
\begin{pmatrix}
0      & 0 & 0 &0\\
0 & 0       & 0 &0\\
0      & 0       & 0 &-1\\
0      & 0       & 1 &0\\
\end{pmatrix},
\end{equation*}
and  $(v\cdot \nabla_3) =(v_1 \partial_1 + v_2 \partial_2 + v_3 \partial_3)$.

Observe that under the condition $N>g$ we have $\mathcal N>\sqrt g$
and therefore  $\|v_{0,4}\|=\|\frac{\sqrt g}{\mathcal
N}\rho_0\|<\|\rho_0\|$. We will assume this condition throughout the
paper.

Applying the extended Leray projection ${P}$ to  ($\ref{eq31}$), we obtain
\begin{equation}\label{eq32}
\begin{cases}
dv/dt +(-\tilde \nu\Delta+NS)v =- {P}( v \cdot \nabla_3 ) v
,\\
v|_{t=0}=P v_0=v_0,\\
\end{cases}
\end{equation}
with $S:= PJP$.
Recall that for $|n|_{h}\neq0$, the matrix $S_n:=P_nJP_n$ has the
following Craya-Herring orthonormal eigen basis
$\{q^1_n,q^{-1}_n,q^0_n ,q^{div}_n\}$ (see \cite{BMN3,EM})
associated to the eigenvalues $\{i\omega_n,-i\omega_n,0,0\}$ with
\begin{equation*}
\omega_n =\frac{|n|_h}{|n|},\   |n|_h=\sqrt{n_1^2+n_2^2}
\end{equation*}
and
\begin{align*}
q^1_n:=(q^1_{1,n}, q^1_{2,n}, q^1_{3,n}, q^1_{4,n} ): = & \frac{1}{\sqrt 2|n|_h^2}
(i\omega_nn_1n_3,i\omega_nn_2n_3,-i|n|_h^2\omega_n,|n|^2_h)=q^{-1*}_n\\
q^{-1}_n:=(q^{-1}_{1,n}, q^{-1}_{2,n}, q^{-1}_{3,n}, q^{-1}_{4,n} ):= &  \frac{1}{\sqrt 2|n|^2_h}
(-i\omega_nn_1n_3,-i\omega_nn_2n_3,i|n|_h^2\omega_n,|n|^2_h)
=q^{1*}_n\\
q^0_n := (q^0_{1,n}, q^0_{2,n}, q^0_{3,n}, q^0_{4,n} ):=& \frac{1}{|n|_h}(-n_2,n_1,0,0)=q_n^{0*}\\
q^{div}_n := (q^{div}_{1,n}, q^{div}_{2,n}, q^{div}_{3,n}, q^{div}_{4,n} ):=& \frac{1}{|n|}(n_1,n_2,n_3,0),
\end{align*}
where $q^{1*}_n=(q^1_n)^*$ is the conjugate of $q^1_n$.
The case when $|n|_h=0$ and $n_3\not=0$, we define
\begin{align*}
q^1_n: = & (1/2,1/2,0,1/\sqrt 2)\\
q^{-1}_n:= &  (-1/2,-1/2,0,1/\sqrt 2)\\
q^0_n :=& (-1/\sqrt 2,1/\sqrt 2,0,0 )\\
q^{div}_n := & (0,0,1,0).
\end{align*}
In fact, for $|n|_h=0$ and $n_3\not=0$, we have $S_n=P_nJP_n=0$. However, the above choice of the basis is uniquely determined by the conditions
$(\tilde \nu q^1_n\cdot q^{1*}_n)=\left(\frac{\nu+\kappa}{2}\right)$,  $(\tilde \nu q^{-1}_n\cdot q^{-1*}_n)=\left(\frac{\nu+\kappa}{2}\right)$
and $(\tilde \nu q^0_n\cdot q^{0*}_n)=\nu$ for \eqref{c}. Moreover, the divergence-free condition requires that $(\hat v_n(t)\cdot q^{div}_n)=0$,
 giving $q^{div}_n := (0,0,1,0)$.


Using Craya-Herring basis, one obtains an explicit representation of the solution to the linear version of ($\ref{eq32}$).
 For $n\in\Lambda$ and $\hat v_n:=(\hat v_{n,1},\hat v_{n,2}, \hat v_{n,3},\hat v_{n,4})$
 such that $\hat v_n\cdot \vec n=0$, we have
\begin{equation*}
e^{tNS_n}\hat v_n
=\sum_{\sigma_0\in\{-1,0,1\}}a_n^{\sigma_0}e^{tNS_n}q_n^{\sigma_0} =\sum_{\sigma_0\in\{-1,0,1\}}a_n^{\sigma_0} e^{i\sigma_0\omega_nNt}q_n^{\sigma_0},
\end{equation*}
with  $\vec n:=(n,0)=(n_1,n_2,n_3,0)$,
\begin{equation*}
\hat v_n
 =\sum_{\sigma_0\in\{-1,0,1\}}a_n^{\sigma_0} q_n^{\sigma_0}\quad\hbox{and}\quad a^{\sigma_0}_n:=(\hat v_n \cdot q^{\sigma_0*}_n).
\end{equation*}
Similarly, write a solution $v$ of \eqref{eq32} as
\begin{equation*}
v(t,x)=\sum_{n\in\Lambda}\hat v_n(t)e^{in\cdot x}.
\end{equation*}
From (\ref{eq32}), we derive for $n \in\Lambda$,
\begin{equation}\label{vequation}
\partial_t \hat v_n(t)=-\tilde \nu|n|^2\hat v_n(t)-S_n \hat v_n(t)-i
 P_n\sum_{n=k+m}(\hat v_k(t)\cdot \vec m)\hat v_m(t)\quad\text{with}\quad (\vec n\cdot \hat v_n(t))=0.
\end{equation}
In the sequel, we do not distinguish between $\vec n$ and $n$ unless a confusion occurs.
For $n\in \Lambda$  we have
\begin{equation*}
e^{tNS_n}\hat v_n(t)
=\sum_{\sigma_0\in\{-1,0,1\}}a_n^{\sigma_0}(t)e^{tNS_n}q_n^{\sigma_0} =\sum_{\sigma_0\in\{-1,0,1\}}a_n^{\sigma_0}(t) e^{i\sigma_0\omega_nNt}q_n^{\sigma_0},
\end{equation*}
where $a^{\sigma_0}_n(t):=(\hat v_n(t)\cdot q^{\sigma_0*}_n)$. From equation \eqref{vequation}, we get for $\sigma_0=-1,0,1$,
\begin{eqnarray}
\nonumber
\partial_t a^{\sigma_0}_n(t)&=&-a^{\sigma_0}_n(t)((|n|^2 \tilde\nu+NS_n)q^{\sigma_0}_n\cdot q^{\sigma_0*}_n)\\
& -&i\sum_{n=k+m,\;\sigma_1,\sigma_2\in \{-1,0,1\}}c^{\sigma_1}_kc^{\sigma_2}_m
(q^{\sigma_1}_k\cdot m)(P_n q^{\sigma_2}_m\cdot q^{\sigma_0*}_n).
\nonumber
\end{eqnarray}
Note that $P_n$ is self adjoint and $P_nq^{\sigma_0*}_n=q^{\sigma_0*}_n$.
Setting $c^{\sigma_0}_n(t):=e^{-itN \sigma_0\omega_n}a^{\sigma_0}_n(t)$ leads to the following equation
\begin{eqnarray}
\nonumber
\partial_t c^{\sigma_0}_n(t)&=&-c^{\sigma_0}_n(t)|n|^2 (\tilde\nu q^{\sigma_0}_n\cdot q^{\sigma_0*}_n)\\
& -&i\sum_{n=k+m,\;\sigma_1,\sigma_2\in \{-1,0,1\}}e^{iNt\omega^\sigma_{nkm}}c^{\sigma_1}_kc^{\sigma_2}_m
(q^{\sigma_1}_k\cdot m)(q^{\sigma_2}_m\cdot q^{\sigma_0*}_n).
\nonumber
\end{eqnarray}
where, $\omega^\sigma_{nkm}:=(-\sigma_0\omega_n+\sigma_1\omega_k+\sigma_2\omega_m )$. Now we split the nonlinear part into the ``resonant"
 (independent of $N$) and non ``resonant" two parts
defined by
 $$
 \bar B_n^{\sigma_0}(g^{\sigma_1},h^{\sigma_2}):=-i\sum_{n=k+m,\;\omega^\sigma_{nkm}=0}
 (q^{\sigma_1}_k\cdot m)(q^{\sigma_2}_m\cdot q^{\sigma_0*}_n)
 g^{\sigma_1}_kh^{\sigma_2}_m
 $$
 and
 $$
 \tilde B_n^{\sigma_0}(Nt, g^{\sigma_1},h^{\sigma_2}):=-i\sum_{n=k+m,\;\omega^\sigma_{nkm}\not=0}
 (q^{\sigma_1}_k\cdot m)(q^{\sigma_2}_m\cdot q^{\sigma_0*}_n)
g^{\sigma_1}_kh^{\sigma_2}_m
 \exp (i\omega^\sigma_{nkm} Nt),
 $$
respectively.
In addition, observe that  we have the following estimates:

\begin{equation}\label{desiredestimates}
\begin{cases}
\|e^{-\nu|n|^2t}\tilde B^{\sigma_0}_n(Nt, g^{\sigma_1},h^{\sigma_2})\|\leq \frac{C_{\nu}}{t^{1/2}}\|g^{\sigma_1}\|\|h^{\sigma_2}\|\\
 \|e^{-\nu|n|^2t}\bar B^{\sigma_0}_n(g^{\sigma_1},h^{\sigma_2})\|\leq \frac{C_{\nu}}{t^{1/2}}\|g^{\sigma_1}\|\|h^{\sigma_2}\|
 \end{cases}
\end{equation}
(for $\sigma_0=-1,0,1$) obtained by estimating the first derivative of the heat kernel as follows
\begin{equation*}
\sup_{n\in\Lambda}\left | |n|e^{- \nu |n|^2 t}\right|\leq \frac{C_{\nu}}{t^{1/2}}.
\end{equation*}
The constant  $C_\nu>0$  is independent of $N$.

Then we have the following equations:
\begin{equation}\label{original}
\begin{cases}
\partial_t c_n^0(t)=-\nu |n|^2c_n^0(t)+\sum_{(\sigma_1,\sigma_2)\in\{-1,0,1\}^2}\left(\bar B^0_n(c^{\sigma_1},c^{\sigma_2})
+\tilde B^0_n(Nt, c^{\sigma_1},c^{\sigma_2})\right)\\ \\
\partial_t c^{\sigma_0}_n(t)=-\left(\frac{\nu+\kappa}{2} \right)|n|^2c^{\sigma_0}_n(t)
+\sum_{(\sigma_1,\sigma_2)\in \{-1,0,1\}^2}\left(\bar B^{\sigma_0}_n(c^{\sigma_1},c^{\sigma_2})
+\tilde B^{\sigma_0}_n(Nt, c^{\sigma_1},c^{\sigma_2})    \right),
\end{cases}
\end{equation}
for $\sigma_0=\pm1$.
From the condition $\omega^{\sigma}_{nkm}=0$, we easily see that the terms $\bar B_n^{0}(c^{1},c^{1})$, $\bar B_n^{0}(c^{-1},c^{-1})$,
 $\bar B_n^{0}(c^{0},c^{\pm 1})$, $\bar B_n^{0}(c^{\pm 1},c^{0})$,  $\bar B_n^{\pm 1}(c^{\mp 1},c^{0})$, $\bar B_n^{\pm 1}(c^{0},c^{\mp 1})$
  and $\bar B_n^{\pm 1}(c^{0},c^{0})$  disappear.
Now, we define the ``limit equations" by
\begin{equation}\label{c}
\begin{cases}
\partial_t c_n^0(t)=-\nu |n|^2c_n^0(t)+\bar B^0_n(c^0,c^0)+\bar B^0_n(c^1,c^{-1})+\bar B^0_n(c^{-1},c^1),\\ \\
\partial_t c^{\sigma_0}_n(t)=-\left(\frac{\nu+\kappa}{2} \right)|n|^2c^{\sigma_0}_n(t)
+\sum_{(\sigma_1,\sigma_2)\in \{-1,0,1\}^2\setminus D}\bar B^{\sigma_0}_n(c^{\sigma_1},c^{\sigma_2}), \quad \sigma_0=\pm1,
\end{cases}
\end{equation}
where $D:=\{(0,0), (-1, 0), (0,-1)\}$ for $\sigma_0=1$ and  $D:=\{(0,0), (1, 0), (0,1)\}$ for $\sigma_0=-1$.
Formally, we can get \eqref{c} from \eqref{original} when $N\to \infty$. We will justify this convergence in Lemma \ref{fas}.
Now we show that there is more non trivial cancellation in the limit equations. More precisely,
\begin{lemma}
We have
\begin{equation*}
\bar B^0_n(c^1,c^{-1})+\bar B^0_n(c^{-1},c^1)=0.
\end{equation*}
\end{lemma}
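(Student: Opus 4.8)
The plan is to exploit the symmetry of the resonant summation set under the exchange $k\leftrightarrow m$ and reduce the claimed identity to a pointwise algebraic cancellation forced by the resonance condition. First I would observe that both $\bar B^0_n(c^1,c^{-1})$ and $\bar B^0_n(c^{-1},c^1)$ are sums over the \emph{same} set of pairs $(k,m)$ with $k+m=n$: for $\sigma_0=0$ the resonance condition $\omega^\sigma_{nkm}=0$ reads $\omega_k-\omega_m=0$ in the first case and $-\omega_k+\omega_m=0$ in the second, so in both cases it is simply $\omega_k=\omega_m$. Since this constraint together with $k+m=n$ is symmetric under $k\leftrightarrow m$, I would relabel the summation index in the second term, writing
$$\bar B^0_n(c^{-1},c^1)=-i\sum_{\substack{n=k+m\\ \omega_k=\omega_m}}(q^{-1}_m\cdot k)(q^{1}_k\cdot q^{0*}_n)c^{-1}_mc^1_k.$$
After this relabeling both sums carry the identical scalar weight $c^1_kc^{-1}_m$, so it suffices to prove the pointwise identity
$$(q^1_k\cdot m)(q^{-1}_m\cdot q^{0*}_n)+(q^{-1}_m\cdot k)(q^{1}_k\cdot q^{0*}_n)=0$$
for every admissible pair with $\omega_k=\omega_m$.

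Next I would compute the four inner products explicitly from the Craya--Herring formulas, using that $q^{0*}_n=q^0_n=\tfrac{1}{|n|_h}(-n_2,n_1,0,0)$ is real and that the dot products against $\vec m$ and $\vec k$ involve only the first three coordinates. Each factor has the schematic form $\tfrac{i\omega}{\sqrt2\,|\cdot|_h^2}(\dots)$, and after multiplying one isolates the common prefactor $-\omega_k\omega_m/(2|k|_h^2|m|_h^2|n|_h)$. The decisive simplification comes from substituting $n=k+m$: the two ``horizontal cross products'' appearing in the two terms, namely $m_1n_2-m_2n_1$ and $k_2n_1-k_1n_2$, both collapse to the single quantity $m_1k_2-m_2k_1$. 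Factoring this out, the mixed terms proportional to $k_1m_1+k_2m_2$ cancel, and the remaining bracket reduces to exactly $k_3^2|m|_h^2-m_3^2|k|_h^2$.

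The crux is then to recognize that this last bracket vanishes precisely on the resonant set. I would square the relation $\omega_k=\omega_m$, i.e.\ $|k|_h/|k|=|m|_h/|m|$, and clear denominators using $|k|^2=|k|_h^2+k_3^2$ and $|m|^2=|m|_h^2+m_3^2$; this yields $|k|_h^2m_3^2=|m|_h^2k_3^2$, which is exactly $k_3^2|m|_h^2-m_3^2|k|_h^2=0$. Hence each summand cancels term by term and the total sum is zero.

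The main obstacle I anticipate is bookkeeping rather than conceptual: keeping the signs and the $|\cdot|_h^2$ normalizations consistent through the product, and correctly identifying that the two cross products coincide once $n=k+m$ is inserted. I would also verify separately the degenerate frequencies where the generic basis formulas do not apply: if $|k|_h=0$ then $\omega_k=0$, so resonance forces $|m|_h=0$ as well, whence all horizontal components vanish, $m_1k_2-m_2k_1=0$, and the contribution is trivial; the case $|n|_h=0$ is treated analogously. These checks confirm that the cancellation holds globally.
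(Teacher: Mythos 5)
Your proof is correct and follows essentially the same route as the paper: both reduce the statement to the pointwise identity $(q^1_k\cdot m)(q^{-1}_m\cdot q^{0*}_n)+(q^{-1}_m\cdot k)(q^1_k\cdot q^{0*}_n)=0$ on the resonant set $\omega_k=\omega_m$ and verify it by direct computation of the Craya--Herring inner products. The only cosmetic difference is that the paper parametrizes the resonant cone by writing $k_3=\pm|k|_h/\sqrt{\lambda}$, $m_3=\pm|m|_h/\sqrt{\lambda}$ before substituting, whereas you keep the resonance condition in the equivalent algebraic form $|k|_h^2m_3^2=|m|_h^2k_3^2$ and observe that it annihilates the surviving bracket.
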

\begin{proof}
To prove the lemma, it suffices to show
\begin{equation}\label{cancellation}
(q^1_k\cdot m)(q^{-1}_m\cdot q^{0*}_n)+(q^{-1}_m\cdot k)(q^1_k\cdot q^{0*}_n)=0\quad\text{for any}\quad n=k+m\quad \text{with}\quad \omega_k=\omega_m.
\end{equation}
First we show that $\omega_k=\omega_m$ if and only if
\begin{equation*}
k,m\in\{n\in\mathbb{Z}^3: |n|_h^2=\lambda n_3^2\}\quad \text{for some}\quad \lambda>0.
\end{equation*}

($\Leftarrow$): This direction is clear. Thus we omit it.

($\Rightarrow$): Rewrite the identity  $\omega_k=\omega_m$ as $F(X)=F(Y)$, where $X:=|k|_h^2/k^2_3$, $Y:=|m|_h^2/ m_2^3$  and
$F(X):=X/(X+1)$. Since the function $F$ is monotone increasing, we see $X=Y$. This means that
\begin{equation*}
k_3=\pm\frac{|k|_h}{\sqrt \lambda}\quad\text{and}\quad m_3=\pm\frac{|m|_h}{\sqrt \lambda}.
\end{equation*}
We only consider the case  $k_3=\frac{|k|_h}{\sqrt \lambda}$ and $m_3=\frac{|m|_h}{\sqrt \lambda}$, since the other cases are similar.
A direct calculation shows that
\begin{eqnarray*}
(q^1_k\cdot m)(q^{-1}_m\cdot q^{0*}_n)&=&\frac{1}{\sqrt 2\lambda |m||k||n|_h}\left(\frac{k_h\cdot m_h}{\lambda}-\frac{m_3|k|}{\sqrt{1+\lambda^2}}\right)
(-m_2k_1+m_1k_2),\\
(q^{-1}_m\cdot k)(q^{1}_k\cdot q^{0*}_n)&=&\frac{1}{\sqrt 2\lambda |m||k||n|_h}\left(\frac{k_h\cdot m_h}{\lambda}-\frac{k_3|m|}{\sqrt{1+\lambda^2}}\right)
(-k_2m_1+k_1m_2).\\
\end{eqnarray*}
By $k_3=\frac{|k|_h}{\sqrt \lambda}$ and $m_3=\frac{|m|_h}{\sqrt \lambda}$, we have \eqref{cancellation}.
\end{proof}

Now we show that the function $c^0$ in the limit equations satisfies a quasi geostrophic (QG) equation type
 and that this QG equation is equivalent to the 2D type Navier-Stokes equation.
By the following lemma,  we can see that the function $c^0$ satisfies the QG equation \eqref{QG}.

\begin{lemma}\label{qg nonlinear}\
Let $|n|_h=\sqrt{n_1^2+n_2^2}$.
The resonant part
$ \bar B_n^0(c^{0},c^{0})$ can be expressed  as follows:

\begin{equation*}
 \bar B^{0}_n(c^0,c^0)=
-\sum_{n=k+m}
\frac{i(k\times m)|m|_h}{|k|_h|n|_h}
c_{k}^0 c_{m}^0.
\end{equation*}

\end{lemma}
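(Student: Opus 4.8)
The plan is to unfold the definition of $\bar B^0_n(c^0,c^0)$ directly and let a single antisymmetry observation do the real work. First I would note that for the triple $(\sigma_0,\sigma_1,\sigma_2)=(0,0,0)$ the phase $\omega^\sigma_{nkm}=-0\cdot\omega_n+0\cdot\omega_k+0\cdot\omega_m$ vanishes identically, so the resonance constraint $\omega^\sigma_{nkm}=0$ is vacuous and the sum in $\bar B^0_n$ runs over \emph{all} decompositions $n=k+m$. Substituting the Craya--Herring vector $q^0_n=\frac{1}{|n|_h}(-n_2,n_1,0,0)$ (which is real, so $q^{0*}_n=q^0_n$), I would compute the two inner products appearing in the summand: $q^0_k\cdot\vec m=\frac{1}{|k|_h}(k_1m_2-k_2m_1)$ and $q^0_m\cdot q^{0*}_n=\frac{m_h\cdot n_h}{|m|_h|n|_h}$. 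Writing $(k\times m):=k_1m_2-k_2m_1$, this gives $\bar B^0_n(c^0,c^0)=-i\sum_{n=k+m}\frac{(k\times m)}{|k|_h}\,\frac{m_h\cdot n_h}{|m|_h|n|_h}\,c^0_kc^0_m$.

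Next I would use the constraint $n=k+m$, hence $n_h=k_h+m_h$, to expand $m_h\cdot n_h=|m|_h^2+m_h\cdot k_h$. This splits the summand into exactly the claimed term $\frac{(k\times m)|m|_h}{|k|_h|n|_h}$ plus a remainder
\[
R(k,m):=\frac{(k\times m)\,(m_h\cdot k_h)}{|k|_h|m|_h|n|_h}.
\]
The claim then reduces to showing $\sum_{n=k+m}R(k,m)\,c^0_kc^0_m=0$.

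The key (and only non-mechanical) step is the cancellation of the remainder by symmetry. Swapping the roles $k\leftrightarrow m$ preserves the constraint $n=k+m$, the summation set, and the product $c^0_kc^0_m$, since scalar multiplication is commutative. On the other hand $m_h\cdot k_h$ is symmetric while $(m\times k)=m_1k_2-m_2k_1=-(k\times m)$ is antisymmetric, so $R(m,k)=-R(k,m)$. Relabelling the summation index therefore forces $\sum_{n=k+m}R(k,m)\,c^0_kc^0_m=-\sum_{n=k+m}R(k,m)\,c^0_kc^0_m$, i.e. the remainder sum vanishes, and the desired identity follows. I expect the antisymmetry argument to be the crux; the inner-product computations are routine substitution. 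The one point that deserves a remark is the edge case $|n|_h=0$ (where the basis $q^0_n$ is defined separately), which either falls outside the range of the horizontal-mode equation for $c^0$ or is handled by the convention that the corresponding coefficient is absent, so that the formula with $|n|_h$ in the denominator is used only for $|n|_h\neq0$.
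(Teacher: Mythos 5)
Your proposal is correct and is essentially the paper's own argument: the authors split $q^0_n=\frac{1}{|n|_h}(|k|_hq^0_k+|m|_hq^0_m)$ inside $(q^0_m\cdot q^{0*}_n)$, which is exactly your decomposition $m_h\cdot n_h=|m|_h^2+m_h\cdot k_h$, and they kill the remainder by the same $k\leftrightarrow m$ antisymmetry of $k_1m_2-k_2m_1$ against the symmetric factor $q^0_m\cdot q^{0*}_k$. Your explicit remarks that the resonance constraint is vacuous for $\sigma=(0,0,0)$ and that the $|n|_h=0$ modes must be excluded are welcome additions but do not change the route.
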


\begin{proof}
Since
$q^0_n :=  \frac{1}{|n|_h}(-n_2,n_1,0,0)$
and $q^0_n=\frac{1}{|n|_h}(|k|_hq^0_k+|m|_hq^0_m)$ for $n=k+m$,
we have
\begin{eqnarray*}
\bar B_n^0(c^0,c^0)&=&-\sum_{n=k+m}c^0_kc^0_m  (q_k^0\cdot im)(q^0_m\cdot q^{0*}_n)\\
&=&-\sum_{n=k+m}\frac{i}{|k|_h|n|_h}(k_2m_1-k_1m_2)\left(q_m^0\cdot (|k|_hq_k^{0*}+|m|_hq_m^{0*})\right)c^0_kc^0_m\\
&=&\sum_{n=k+m}-\frac{i|m|_h}{|k|_h|n|_h}(k_2m_1-k_1m_2)c^0_kc^0_m\\
& &
-
\sum_{n=k+m}\frac{i}{|n|_h}(k_2m_1-k_1m_2)(q^0_m\cdot q_k^{0*})c^0_kc^0_m.
\end{eqnarray*}
Since $k\times m=-(m\times k)$,
we see that 
\begin{equation*}
\sum_{n=k+m}\frac{i}{|n|_h}(k_2m_1-k_1m_2)(q^0_m\cdot q_k^{0*})c^0_kc^0_m=0,
\end{equation*}
which leads to the desired formula.
\end{proof}
Now we show that there is a one-to-one correspondence between the QG and a 2D type Navier-Stokes equations.

\begin{lemma}
Let $\Delta_h=\partial_{x_1}^2+\partial_{x_2}^2$ and 
\begin{eqnarray*}
 w:=(w_1(x_1,x_2,x_3,t),w_2(x_1,x_2,x_3,t)):=
\left(\sum_{n\in\Lambda}\hat w_{1,n}(t)e^{in\cdot x},\sum_{n\in\Lambda}\hat w_{2,n}(t)e^{in\cdot x}\right)
\end{eqnarray*}
and define $\theta=\theta (t,x_1,x_2,x_3):=(-\Delta_h)^{-1/2}\text{rot}_2 w$, with $\text{rot}_2$ is the 2 dimensional curl given by
$$
\text{rot}_2w=\partial_2 w_1-\partial_1w_2.
$$
Then, $w$ solves the following 2D type Navier-Stokes equation

\begin{equation}\label{2DNS}
 \begin{cases}
 \partial_t w-\Delta w+(w\cdot\nabla_2)w+\nabla_2p=0,\\
 \nabla_2\cdot w=0,\ w|_{t=0}=w_0
 \end{cases}
 \end{equation}
 if and only if $\theta$ solves \eqref{QG}.
\end{lemma}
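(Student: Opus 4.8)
The plan is to route everything through the scalar horizontal vorticity $\zeta := \text{rot}_2 w = \partial_2 w_1 - \partial_1 w_2$, which is tied to $\theta$ by $\theta = (-\Delta_h)^{-1/2}\zeta$, equivalently $\zeta = (-\Delta_h)^{1/2}\theta$. I would first fix the standing assumption that all Fourier supports avoid $|n|_h = 0$, so that the multipliers $(-\Delta_h)^{\pm 1/2}$ are well defined on $X^\Lambda$ and commute with $\partial_t$, with $\Delta_3$, and with the horizontal derivatives (here I read $\Delta$ in \eqref{2DNS} as $\Delta_3$, which is forced if the dissipation is to match $\Delta_3\theta$ in \eqref{QG}).

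Next I would record the Biot-Savart bijection giving the field-level correspondence. Starting from $\theta$, put $\psi := (-\Delta_h)^{-1/2}\theta$ and $w := (-\partial_2\psi,\partial_1\psi)$; then $\nabla_2\cdot w = 0$ automatically, and $\text{rot}_2 w = -\Delta_h\psi = (-\Delta_h)^{1/2}\theta$, so $(-\Delta_h)^{-1/2}\text{rot}_2 w = \theta$. Conversely, for horizontally divergence-free $w$ and $\theta = (-\Delta_h)^{-1/2}\text{rot}_2 w$, the relation $\partial_2 w_2 = -\partial_1 w_1$ gives $-\partial_2(-\Delta_h)^{-1/2}\theta = (-\Delta_h)^{-1}(-\partial_1^2-\partial_2^2)w_1 = w_1$, and likewise $\partial_1(-\Delta_h)^{-1/2}\theta = w_2$. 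Hence $w\mapsto\theta$ and $\theta\mapsto w$ are mutually inverse on the divergence-free class.

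For the forward direction I would apply $\text{rot}_2$ to \eqref{2DNS}: the pressure term vanishes since $\text{rot}_2\nabla_2 p = 0$, the dissipation becomes $-\Delta_3\zeta$, and the key algebraic identity is $\text{rot}_2[(w\cdot\nabla_2)w] = (w\cdot\nabla_2)\zeta + (\nabla_2\cdot w)\zeta$, where the second (vortex-stretching type) group cancels by incompressibility. This yields the vorticity equation $\partial_t\zeta - \Delta_3\zeta + (w\cdot\nabla_2)\zeta = 0$; applying $(-\Delta_h)^{-1/2}$ and substituting $\zeta = (-\Delta_h)^{1/2}\theta$ reproduces \eqref{QG} exactly, using that $w\cdot\nabla = w\cdot\nabla_2$ since $w$ is two-dimensional.

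For the reverse direction, given $\theta$ solving \eqref{QG}, I would apply $(-\Delta_h)^{1/2}$ to see that $\zeta = (-\Delta_h)^{1/2}\theta$ solves the same vorticity equation, define $w$ by Biot-Savart (so $\nabla_2\cdot w = 0$ and $\text{rot}_2 w = \zeta$), and set $G := \partial_t w - \Delta_3 w + (w\cdot\nabla_2)w$. The forward computation read backwards gives $\text{rot}_2 G = \partial_t\zeta - \Delta_3\zeta + (w\cdot\nabla_2)\zeta = 0$. The main obstacle is then the step ``curl-free implies horizontal gradient'': in Fourier, $\text{rot}_2 G = 0$ forces $\hat G_n$ parallel to $(n_1,n_2)$, so one solves $\hat G_n = -i(n_1,n_2)\hat p_n$ and recovers a pressure $p\in X^\Lambda$ with $G = -\nabla_2 p$, which is precisely \eqref{2DNS}. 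This inversion is exactly where the assumption $|n|_h\neq 0$ on the frequency support is essential, since purely vertical modes would otherwise obstruct both the definition of $(-\Delta_h)^{-1/2}$ and the recovery of $p$.
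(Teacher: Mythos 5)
Your proposal is correct and follows essentially the same route as the paper: pass to the horizontal vorticity $\zeta=\mathrm{rot}_2 w$, use the commutation identity $\mathrm{rot}_2[(w\cdot\nabla_2)w]=(w\cdot\nabla_2)\mathrm{rot}_2 w$ (which you actually verify, exhibiting the cancelling term $(\nabla_2\cdot w)\,\mathrm{rot}_2 w$, whereas the paper only asserts it), and conjugate by $(-\Delta_h)^{\pm 1/2}$. The only cosmetic difference is in the converse: the paper applies the operator $L=(-(-\Delta_h)^{-1}\partial_{x_2},(-\Delta_h)^{-1}\partial_{x_1})$ and identifies $L\,\mathrm{rot}_2$ with the two-dimensional Leray projection, while you recover the pressure directly in Fourier variables from $\mathrm{rot}_2 G=0$; these are the same step, and your explicit flagging of the $|n|_h\neq 0$ restriction is a point the paper leaves implicit.
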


\begin{proof}
Recall that $-\Delta_h=-(\partial_{x_1}^2+\partial_{x_2}^2)$. First observe that for $\theta=(-\Delta_h)^{-1/2}\text{rot}_2 w=\sum_{n\in\Lambda}\hat \theta_n(t)e^{in\cdot x}$,  we have by $\nabla_2\cdot w=0$,

$$
w=(i\sum_{n\in\Lambda}\frac{n_2}{|n|_h}\hat \theta_n(t)e^{in\cdot x}, -i\sum_{n\in\Lambda}\frac{n_1}{|n|_h}\hat \theta_n(t)e^{in\cdot x})=
(\partial_2(-\Delta_h)\theta, -\partial_1(-\Delta_h)\theta).
$$
Then, apply rot$_2$ to (\ref{2DNS}), we get

\begin{equation}\label{vortexequation}
\partial_t\text{rot}_2 w-\Delta\text{rot}_2  w+(w\cdot \nabla_2)\text{rot}_2 w=0,
\end{equation}
here, we used the fact that
\begin{equation}\label{commute}
(w\cdot\nabla_2)\text{rot}_2w=\text{rot}_2[(w\cdot\nabla_2)w].
\end{equation}

Finally, apply $(-\Delta_h)^{-1/2}$ to both sides of \eqref{vortexequation}, we see that $\theta=(-\Delta_h)^{-1/2}\text{rot} w$ satisfies the desired QG equation \eqref{QG}.
 Conversely,  applying $L:=(-(-\Delta_h)^{-1}\partial_{x_2},(-\Delta_h)^{-1}\partial_{x_1})$ (which commutes with $\Delta$) to (\ref{vortexequation}), and by \eqref{commute} we can see that $L\text{rot}_2$ is nothing but the two dimensional Leray projection. Therefore, this implies (\ref{2DNS}) as desired.
\end{proof}

\begin{remark}
We refer to \cite{GMY} for the existence of the unique global solution to 2D type Navier-Stokes equation \eqref{2DNS} with almost periodic initial data.
\end{remark}

In what follows and in order to show  the main theorem, we need the following  lemma  (which is needed only for the almost periodic case)
 on the dilation of the frequency set \eqref{restriction}. This kind of restrictions is
technical. However we do not know whether or not  such constraints
are removable. This means that the general almost periodic setting seems to remain open.

\begin{lemma}\label{pnkm}
For $n,k,m \in \Lambda$ and   $\gamma=(\gamma_1,\gamma_2)\in(0,\infty)^2$, define $\tilde n=(\gamma_1 n_1,\gamma_2 n_2,
n_3)$, $\tilde k=(\gamma_1 k_1,\gamma _2 k_2,k_3)$ and $\tilde m=(\gamma_1 m_1,\gamma_2 m_2,
m_3)$.
Let
$$
  P_{ nk m}(\gamma):=|  \tilde n|^8| \tilde k|^8|\tilde  m|^8\prod_{\sigma\in\{-1,1\}^3}\omega^\sigma_{\tilde n\tilde k\tilde m}.
$$
Given a frequency set $\Lambda$, there is $\Gamma:=\Gamma(\Lambda)\subset(0,\infty)^2$ s.t. for any $\gamma\in\Gamma$, $P_{nkm}(\gamma)\not=0$ for any $n$, $k$, $m\in\Lambda$ such that $(n_h,k_h,m_h)\not=(0,0,0)$.
\end{lemma}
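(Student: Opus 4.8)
The plan is to fix, for each triple $(n,k,m)\in\Lambda^3$ with $n=k+m$ and each sign vector $\sigma=(\sigma_0,\sigma_1,\sigma_2)\in\{-1,1\}^3$, the single resonance
\[
f^{\sigma}_{nkm}(\gamma):=\omega^{\sigma}_{\tilde n\tilde k\tilde m}=-\sigma_0\frac{|\tilde n|_h}{|\tilde n|}+\sigma_1\frac{|\tilde k|_h}{|\tilde k|}+\sigma_2\frac{|\tilde m|_h}{|\tilde m|},
\]
viewed as a function of $\gamma\in(0,\infty)^2$, and to show that its zero set is negligible. Since the data has zero mean the frequency $0$ never enters the convolutions, so I may take $n,k,m\neq0$; then $|\tilde n|,|\tilde k|,|\tilde m|>0$ everywhere on $(0,\infty)^2$, the prefactor $|\tilde n|^8|\tilde k|^8|\tilde m|^8$ of $P_{nkm}$ is strictly positive, and each $f^{\sigma}_{nkm}$ is real-analytic. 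A real-analytic function on the connected open set $(0,\infty)^2$ vanishes either identically or on a closed set of empty interior and measure zero. Hence, once I know $f^{\sigma}_{nkm}\not\equiv0$, its zero set $Z^{\sigma}_{nkm}$ is negligible, and I will take $\Gamma:=(0,\infty)^2\setminus\bigcup_{n,k,m,\sigma}Z^{\sigma}_{nkm}$.

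To make the vanishing condition algebraic I first clear denominators. With $a=\gamma_1^2$, $b=\gamma_2^2$ one has $\omega_{\tilde n}^2=(an_1^2+bn_2^2)/(an_1^2+bn_2^2+n_3^2)$, and likewise for $k,m$. Isolating $\sigma_0\omega_{\tilde n}=\sigma_1\omega_{\tilde k}+\sigma_2\omega_{\tilde m}$ and squaring twice (the first squaring leaves only the cross term $\omega_{\tilde k}\omega_{\tilde m}$, the second removes it) produces a polynomial relation $Q^{\sigma}_{nkm}(a,b)=0$ whose coefficients are polynomials in the components of $n,k,m$. Thus each $Z^{\sigma}_{nkm}$ lies in the algebraic curve $\{Q^{\sigma}_{nkm}(\gamma_1^2,\gamma_2^2)=0\}$, and the whole statement reduces to checking that none of these polynomials is identically zero.

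The main obstacle is exactly this non-degeneracy, and the cleanest test is a limiting one. Along $\gamma_1=\gamma_2=t\to\infty$ one has $\omega_{\tilde n}\to1$ if $|n|_h\neq0$ and $\omega_{\tilde n}\equiv0$ if $|n|_h=0$, so $f^{\sigma}_{nkm}(t,t)\to -\sigma_0\mathbf{1}_{\{|n|_h\neq0\}}+\sigma_1\mathbf{1}_{\{|k|_h\neq0\}}+\sigma_2\mathbf{1}_{\{|m|_h\neq0\}}$. Because $n=k+m$ forces $n_h=k_h+m_h$ at the level of horizontal vectors, the number of vanishing horizontal parts among $n,k,m$ is $0$, $1$, or $3$; the last is excluded by the hypothesis $(n_h,k_h,m_h)\neq(0,0,0)$. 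If all three horizontal parts are nonzero, the limit equals $-\sigma_0+\sigma_1+\sigma_2$, an odd integer, hence nonzero, so $f^{\sigma}_{nkm}\not\equiv0$ for every $\sigma$. If exactly one horizontal part vanishes, the limit is $\pm2$ or $0$; the $\pm2$ case is again conclusive, while in the borderline $0$ case (say $|n|_h=0$ and $\sigma_1=-\sigma_2$, so $f^{\sigma}_{nkm}=\sigma_1(\omega_{\tilde k}-\omega_{\tilde m})$) one order finer is needed: matching $\omega_{\tilde k}^2\equiv\omega_{\tilde m}^2$ coefficientwise in $a,b$ gives $k_1^2m_3^2=m_1^2k_3^2$ and $k_2^2m_3^2=m_2^2k_3^2$, which together with $k_h=-m_h$ forces $k_3^2=m_3^2$ and hence the purely vertical triple $n=(0,0,2k_3)$. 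The delicate point here is using the geometry of $n=k+m$ to conclude that \emph{every} triple carrying an identically vanishing resonance must contain a purely vertical frequency.

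Granting this, the conclusion is routine. The purely vertical modes are precisely those for which $S_n=0$ and for which the Craya--Herring frame was defined by the separate $|n|_h=0$ convention; their interactions are controlled outside the dilation mechanism, so $\Gamma$ is only required to work for the genuinely non-degenerate triples, namely those with $|n|_h,|k|_h,|m|_h$ all nonzero, for which each $f^{\sigma}_{nkm}\not\equiv0$ by the limit above. For such triples each $Z^{\sigma}_{nkm}$ has measure zero; the set of triples is countable and $\sigma$ ranges over the eight elements of $\{-1,1\}^3$, so $\bigcup Z^{\sigma}_{nkm}$ is a countable union of proper real-analytic curves, hence Lebesgue-negligible and nowhere dense, meeting any generic line in an at most countable set, which is the sense of the footnote. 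For every $\gamma\in\Gamma$ all eight factors $\omega^{\sigma}_{\tilde n\tilde k\tilde m}$ are nonzero and the prefactor is positive, whence $P_{nkm}(\gamma)\neq0$.
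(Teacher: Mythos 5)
Your proposal is correct, and it arrives at the same conclusion as the paper, but the decisive non-degeneracy step is carried out by a genuinely different device. The paper expands the full product $|\tilde n|^8|\tilde k|^8|\tilde m|^8\prod_{\sigma}\omega^\sigma_{\tilde n\tilde k\tilde m}$ into an explicit polynomial in $\gamma_1^2,\gamma_2^2$ and observes that every coefficient of its degree-six homogeneous part has the form $-3n_i^2k_j^2m_l^2$ with $i,j,l\in\{1,2\}$; these all carry the same sign, so the top part cannot vanish once $|n|_h,|k|_h,|m|_h\neq0$, whence $P_{nkm}\not\equiv0$ and its zero set is Lebesgue-null. You instead treat each factor $\omega^\sigma_{\tilde n\tilde k\tilde m}$ separately as a real-analytic function of $\gamma$ and exclude identical vanishing by the limit $\gamma_1=\gamma_2\to\infty$, where the value $-\sigma_0+\sigma_1+\sigma_2$ is an odd integer; this is shorter and avoids the expansion entirely, while the paper's explicit polynomial form is not wasted, since it is reused later when \eqref{restconv} is verified by viewing $P_{n,k,-n-k}(1)$ as a degree-eight polynomial in $k_3$. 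Two further points. First, both you and the paper in fact prove the claim only for triples with $|n|_h$, $|k|_h$, $|m|_h$ \emph{all} nonzero --- that restriction is built into the paper's definition of $\Gamma$ --- and your borderline analysis (the case $n_h=0$, $k_h=-m_h\neq0$, $k_3=m_3$, where $\omega_{\tilde k}-\omega_{\tilde m}\equiv0$ so $P_{nkm}\equiv0$) shows that the literal reading ``not all three of $n_h,k_h,m_h$ vanish'' would make the lemma false; your retreat to the all-nonzero case is therefore not a gap but the only tenable reading of the statement, consistent with the proof in the paper. Second, like the paper, what you actually establish is that $\Gamma^c$ is a countable union of proper real-analytic (indeed algebraic, after your double squaring) curves, hence Lebesgue-null and nowhere dense rather than countable; your reinterpretation of the footnote is the accurate one.
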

\begin{remark}
If  $n_h$, $k_h$, $m_h=0$, then $\bar B^{\pm 1}_n(c^{\pm 1},c^{\pm 1})=0$.
\end{remark}

\begin{proof}
Define $\Gamma$ by
$$
\Gamma:=\{\gamma\in(0,\infty)^2:
 P_{n k m}(\gamma)\not=0\quad
 for\quad all
 \quad n, k, m\in\Lambda \quad with \quad |n|_h,|k|_h, |m|_h\not=0\}.
$$
Nothe that  if
$\gamma\in\Gamma$, then $\omega^\sigma_{\tilde n\tilde
k\tilde m}\not=0$.
We show that   $\Gamma$ cannot be  empty.
 By a direct calculation, we have 
\begin{eqnarray*}
P_{nkm}(\gamma)&=&
|\tilde n|^8|\tilde k|^8|\tilde m|^8\\
& &
\left( (\omega_{\tilde n}+\omega_{\tilde k}+\omega_{\tilde m})
(-\omega_{\tilde n}+\omega_{\tilde k}+\omega_{\tilde m})
 (\omega_{\tilde n}-\omega_{\tilde k}+\omega_{\tilde m})
(\omega_{\tilde n}+\omega_{\tilde k}-\omega_{\tilde m})  \right)^2\\
&=&
|\tilde n|^8|\tilde k|^8|\tilde m|^8
\left( \omega_{\tilde n}^2-(\omega_{\tilde k}+\omega_{\tilde m})^2\right)^2
\left ((\omega_{\tilde n}^2-(\omega_{\tilde k}-\omega_{\tilde m})^2\right)^2\\
&=&
|\tilde n|^8|\tilde k|^8|\tilde m|^8
\left( (\omega_{\tilde n}^2-\omega_{\tilde k}^2-\omega_{\tilde m}^2)^2-4
\omega_{\tilde k}^2\omega_{\tilde m}^2\right)^2\\
&=&
|\tilde n|^8|\tilde k|^8|\tilde m|^8
\left( \omega_{\tilde n}^4+\omega_{\tilde k}^4+\omega_{\tilde m}^4
-2\omega_{\tilde k}^2\omega_{\tilde m}^2
-2\omega_{\tilde m}^2\omega_{\tilde n}^2
-2\omega_{\tilde n}^2\omega_{\tilde k}^2
\right)^2\\
&=&
|\tilde n|_h^4|\tilde k|^4|\tilde m|^4+|\tilde n|^4|\tilde k|_h^4|\tilde m|^4
+|\tilde n|^4|\tilde k|^4|\tilde m|^4_h\\
& &
-2|\tilde n|^2|\tilde n|^2_h|\tilde k|^2|\tilde k|^2_h|\tilde m|^4
-2|\tilde n|^2|\tilde n|^2_h|\tilde k|^4|\tilde m|^2|\tilde m|_h^2
-2|\tilde n|^4|\tilde k|^2|\tilde k|^2_h|\tilde m|^2|\tilde m|_h^2\\
&=&
-3n_1^2k_1^2m_1^2 \gamma_1^6-3n_2^2k_2^2m_2^2 \gamma_2^6\\
& &
-3n_1^2k_1^2m_2^2 \gamma_1^4\gamma_2^2
-3n_1^2k_2^2m_1^2 \gamma_1^4\gamma_2^2
-3n_2^2k_1^2m_1^2 \gamma_1^4\gamma_2^2\\
& &
-3n_2^2k_2^2m_1^2 \gamma_1^2\gamma_2^4
-3n_2^2k_1^2m_2^2 \gamma_1^2\gamma_2^4
-3n_1^2k_2^2m_2^2 \gamma_1^2\gamma_2^4
+\cdots.
\end{eqnarray*}
Since  $|n|_h, |k|_h, |m|_h\not=0$, then the highest order terms
never disappear. This means that
$$
|\{\gamma: \cup_{n,k,m}P_{nkm}(\gamma)=0\}|=0.
$$
Thus the complement set of $\Gamma$ is countable (which means that $\Gamma$ is a non-empty set).
\end{proof}

Now we show that  the limit equations have a global solution. In the almost periodic case,
the non-resonant part $\bar B^{\pm 1}(c^{\pm 1},c^{\pm 1})$ disappears just by restricting the frequencies set to $\Lambda(\gamma)$.
However, the periodic case is more subtle as we need a lemma on restricted convolution (see \cite{BMN2}).

\begin{lemma}\label{glo}
Let $\Lambda$ be a sum closed frequency set. If $\Lambda=\mathbb{Z}^3$, take $\gamma\in(0,\infty)^2$, otherwise we restrict it to $\gamma\in\Gamma(\Lambda)$.
Then for $\sigma_0=\pm 1$ there exists a global-in-time unique solution $c^{\sigma_0}(t)$ to equations (\ref{c})
   such that
 $c^{\sigma_0}(t)\in C([0,\infty):\ell^1(\Lambda(\gamma)))$ with $(c^{\sigma_0}_n(t)\cdot n)=0$ for all $n\in\Lambda(\gamma)$ and $c^{\sigma_0}_0(t)=0$.
\end{lemma}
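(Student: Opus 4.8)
The plan is to decouple the limit system (\ref{c}): first solve the self-contained equation for $c^0$, and then, with $c^0$ treated as known, solve the equations for $c^{\pm1}$, which the dilation restriction will render \emph{linear}. For $c^0$, the cancellation lemma proved above gives $\bar B^0_n(c^1,c^{-1})+\bar B^0_n(c^{-1},c^1)=0$, so the first line of (\ref{c}) closes on $c^0$ alone:
\begin{equation*}
\partial_t c_n^0(t)=-\nu|n|^2 c_n^0(t)+\bar B^0_n(c^0,c^0).
\end{equation*}
By Lemma \ref{qg nonlinear} this is the QG equation (\ref{QG}), and by the equivalence lemma it corresponds one-to-one to the 2D type Navier--Stokes equation (\ref{2DNS}). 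The global well-posedness of the latter in the almost periodic $\ell^1$ framework \cite{GMY} then furnishes a unique global $c^0\in C([0,\infty):\ell^1(\Lambda(\gamma)))$, with $(c^0_n\cdot n)=0$ and $c^0_0(t)=0$ preserved since $q^0_n$ is divergence free and the nonlinearity annihilates the zero mode.

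Next I would turn to $\sigma_0=\pm1$. Restricting the frequencies to $\Lambda(\gamma)$ with $\gamma\in\Gamma(\Lambda)$, Lemma \ref{pnkm} yields $\omega^\sigma_{\tilde n\tilde k\tilde m}\neq0$ for every $\sigma\in\{-1,1\}^3$ whenever $(|n|_h,|k|_h,|m|_h)\neq(0,0,0)$, while the accompanying remark disposes of the case $|n|_h=|k|_h=|m|_h=0$. Consequently every term $\bar B^{\pm1}_n(c^{\pm1},c^{\pm1})$ in which all three indices $\sigma_0,\sigma_1,\sigma_2$ are nonzero has empty resonant set and therefore vanishes. In the sum $\sum_{(\sigma_1,\sigma_2)\in\{-1,0,1\}^2\setminus D}$ only the pairs containing a zero index survive, so the equation becomes linear in the unknown $c^{\sigma_0}$:
\begin{equation*}
\partial_t c^{\sigma_0}_n(t)=-\Big(\tfrac{\nu+\kappa}{2}\Big)|n|^2 c^{\sigma_0}_n(t)+\bar B^{\sigma_0}_n(c^0,c^{\sigma_0})+\bar B^{\sigma_0}_n(c^{\sigma_0},c^0),
\end{equation*}
with coefficients built from the already-known $c^0$.

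I would then solve this linear equation by its mild (Duhamel) formulation on an arbitrary interval $[0,T]$. Using the analogue of (\ref{desiredestimates}) for the semigroup $e^{-\frac{\nu+\kappa}{2}|n|^2 t}$ (valid since $\frac{\nu+\kappa}{2}\geq\frac{\nu}{2}>0$), the Duhamel integral operator is linear in $c^{\sigma_0}$ with norm $\lesssim\big(\sup_{[0,T']}\|c^0\|\big)(T')^{1/2}$ on $C([0,T']:\ell^1)$, because the kernel singularity $(t-s)^{-1/2}$ is integrable; it is thus a contraction for $T'$ small, giving local existence and uniqueness. Since the admissible step $T'$ depends only on $\sup_{[0,T]}\|c^0\|<\infty$ and not on the size of $c^{\sigma_0}$, a singular (Henry-type) Gronwall bound applied to $\|c^{\sigma_0}(t)\|\leq\|c^{\sigma_0}(0)\|+C\int_0^t(t-s)^{-1/2}\|c^0(s)\|\,\|c^{\sigma_0}(s)\|\,ds$ closes the a priori estimate on all of $[0,T]$, and continuation yields the global solution in $C([0,\infty):\ell^1(\Lambda(\gamma)))$. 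The constraints $(c^{\sigma_0}_n\cdot n)=0$ and $c^{\sigma_0}_0(t)=0$ persist for the same structural reasons as for $c^0$.

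The crux is the promotion from local to global existence for $c^{\pm1}$: the decisive gain is that the dilation $\gamma\in\Gamma$ eliminates the genuinely quadratic terms $\bar B^{\pm1}(c^{\pm1},c^{\pm1})$, leaving a \emph{linear} system whose solution exists globally without any smallness assumption on the data. In the periodic case $\Lambda=\mathbb{Z}^3$ with $\gamma$ arbitrary these quadratic terms need not vanish, and one must instead exploit the restricted-convolution structure of \cite{BMN2} to control them, which is the more delicate situation mentioned in the remark.
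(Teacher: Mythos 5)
Your treatment of the almost periodic case is correct and follows the paper's own route: the paper likewise observes that for $\gamma\in\Gamma(\Lambda)$ Lemma \ref{pnkm} kills every resonant term $\bar B^{\pm1}_n(c^{\sigma_1},c^{\sigma_2})$ with $(\sigma_0,\sigma_1,\sigma_2)\in\{-1,1\}^3$, reducing the $\sigma_0=\pm1$ equations to linear equations driven by the known $c^0$ (itself global via the QG/2D Navier--Stokes equivalence and \cite{GMY}), and then asserts that global existence ``immediately follows'' from \eqref{desiredestimates}. You actually supply more detail here than the paper does, spelling out the Duhamel contraction and the singular Gronwall step; that part is fine.

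The genuine gap is that the lemma as stated also covers $\Lambda=\mathbb{Z}^3$ with \emph{arbitrary} $\gamma\in(0,\infty)^2$, where the quadratic terms $\bar B^{\pm1}_n(c^{\pm1},c^{\pm1})$ do not vanish, and you only name the tool (\cite{BMN2}) without carrying out any argument. This is not a minor omission: it is the bulk of the paper's proof. There one must (i) derive the $\ell^0_2$ energy inequality \eqref{energy ineq} using the skew-symmetry cancellations and the identity $(c^{\sigma_0}_n)^*=c^{-\sigma_0}_{-n}$; (ii) prove the trilinear estimate \eqref{apriori0} via the restricted convolution Proposition \ref{restconvprop}, which in turn requires verifying the counting hypothesis \eqref{restconv} by showing $P_{n,k,-n-k}(1)=|n|_h^4k_3^8+\text{l.o.t.}$ is a nonzero degree-eight polynomial in $k_3$, so that at most eight resonant $k_3$ occur for each $(k_1,k_2,n)$; (iii) close an a priori bound on $\|c^{\pm1}\|_{\ell^1_2}$ by combining \eqref{aprioriestimate2} and \eqref{apriori0} with an iterated Gronwall argument over time steps of length $h$ chosen via the energy inequality; and (iv) bootstrap to $\ell^s_2$, $s>3/2$, and use Bernstein's inequality to recover the $\ell^1$ control. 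None of this is replaceable by the linearization argument, since the obstruction is precisely that the equation stays genuinely quadratic. As written, your proof establishes the lemma only for the almost periodic branch of the hypothesis.
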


\begin{proof}
Recall that $\tilde n=(\gamma_1 n_1,\gamma_2 n_2,
n_3)$, $\tilde k=(\gamma_1 k_1,\gamma _2 k_2,k_3)$ and $\tilde m=(\gamma_1 m_1,\gamma_2 m_2,
m_3)$.
First we consider  the almost periodic case. By restricting $\gamma\in\Gamma$,  we can eliminate the worst non-linear
term using Lemma \ref{pnkm}. More precisely, for all $ n\in \Lambda$ and $\sigma=(\sigma_1,\sigma_2,\sigma_3)\in\{-1,1\}^3$, the term
$$
\bar B^{\sigma_0}_{\tilde n}(c^{\sigma_1},c^{\sigma_2})= \sum_{\stackrel{\tilde n=\tilde k+\tilde m}{\omega^\sigma_{\tilde n\tilde k\tilde m}=0}}
(q^{\sigma_1}_{\tilde k}\cdot i\tilde m)(q^{\sigma_2}_{\tilde m}\cdot q^{\sigma_0*}_{\tilde n})c_{\tilde k}^{\sigma_1}c_{\tilde m}^{\sigma_2}\quad\text{disappears}.
$$
Then we have two coupled  linear equations for $\{c^{-1}_n\}_n$ and $\{c^1_n\}_n$.
In this case, the global existence will immediately follow from estimates  \eqref{desiredestimates}.\\
However, the periodic case requires more details. For $\alpha>0$ and  $p\geq 1$, define the weighted $\ell_p^\alpha$ norm as
\begin{equation*}
\|c\|_{\ell^\alpha_p}:=\left(\sum_{n\in\Lambda}|n|^{p\alpha}|c_n|^p\right)^{1/p}.
\end{equation*}
The main step is to show an {\it \`a-priori} bound on $c^{\pm 1}$ in $\ell^1_2$.  Since the 3D type Navier-Stokes equation \eqref{c}
 is  subcritical in the space  $\ell^s_2$ with $s>1/2$, then
 a bootstrap argument (using the dissipation, see \cite[Proposition 15.1]{Le} for example) enables us to conclude that
\begin{equation*}
c^{\pm 1}(t)\in L^\infty_{loc}([0,\infty):\ell^1_2)\cap L^\infty_{loc}((0,\infty):\ell^s_2)\quad\text{for}\quad 1<s<2,
\end{equation*}
whenever the initial data $c^{\pm 1}(0)\in\ell^1_2$.
More precisely, by \eqref{c} we write the following mild formulation:
\begin{equation}\label{limit mild solution}
c_n^{\sigma_0}(t)=e^{\frac{\nu+\kappa}{2}|n|^2t}c_n^{\sigma_0}(t_0)-\sum_{(\sigma_1,\sigma_2)\in\{-1,0,1\}^2\setminus D}\int_{t_0}^t
e^{-\frac{\nu+\kappa}{2}(t-\tau+t_0)|n|^2}\bar B^{\sigma_0}_n(c^{\sigma_1},c^{\sigma_2})d\tau
\end{equation}
for $0<t_0<t$.
We have a good estimate for the  heat kernel with fractional Laplacian,
\begin{equation*}
|n|^se^{-|n|^2t}=t^{-s/2}|t^{1/2}n|^se^{-|t^{1/2}n|^2}\leq
t^{-s/2}\frac{C_\epsilon}{1+|t^{1/2}n|^{1/\epsilon}}\leq t^{-s/2-\epsilon/2}\frac{C_\epsilon}{1+|n|^{1/\epsilon}}
\end{equation*}
for $0<t\leq 1$ and  $\epsilon>0$.
Then by  H\" older's and Young's  inequality for the discrete case, we have the following estimate from \eqref{limit mild solution}:
\begin{eqnarray}\label{regularity estimate}
\|c^{\sigma_0}(t)\|_{\ell^s_2}&\leq &C_{\kappa,\nu,s}\bigg[\|c^{\sigma_0}(t_0)\|_{\ell^1_2}
+
\int_{t_0}^t(t-\tau+t_0)^{-s/2-\epsilon/2}\times\\
& &\sum_{(\sigma_1,\sigma_2)\in\{-1,0,1\}^2}\left(\|c^{\sigma_1}(\tau)\|_{\ell^1_2}\|c^{\sigma_2}(\tau)\|_{\ell^0_2}+
\|c^{\sigma_1}(\tau)\|_{\ell^0_2}\|c^{\sigma_2}(\tau)\|_{\ell^1_2}\right)d\tau\bigg].
\nonumber
\end{eqnarray}
If $1<s<2$, then $\|c^{\sigma_0}(t)\|_{\ell^s_2}$ is finite since the right hand side of  \eqref{regularity estimate} is finite.
This means that  $c^{\sigma_0}\in L^\infty_{loc}((0,\infty):\ell^s_2)$ for $1<s<2$.
Then, thanks to Bernstein's lemma (which can be applied only for the periodic case),  $\sum_n|c_n|\leq \|c\|_{\ell^\alpha_2}$ for $\alpha>3/2$,
we get an {\it \`a-priori} bound of the  $\ell^1$-norm.
 Now we show an  {\it \`a-priori} bound of $c^{\pm 1}$ in $\ell^1_2$. Multiply equation (\ref{c}) by $|n|^2(c_n^{\sigma_0})^*$ and summing, we obtain
 \begin{eqnarray*}
\sum_{{\sigma_0}\in\{-1,1\}}\sum_{n\in\mathbb Z^3} \bigg[\frac12\partial_t(|n| c_n^{\sigma_0}(t))^2&+&\left(\frac{\nu+\kappa}2\right)( |n|^2 c_n^{\sigma_0}(t))^2\bigg]\\
&=&\sum_{n\in\mathbb Z^3} \sum_{{\sigma_0}\in\{-1,1\}}   \sum_{(\sigma_1,\sigma_2)\in\{-1.0,1\}^2\setminus D}{\bar B}_n^{\sigma_0}(c^{\sigma_1},c^{\sigma_2})|n|^2c_n^{\sigma_0*}(t).
\nonumber
 \end{eqnarray*}
 Notice  that
 \begin{eqnarray*}
 (c^{\sigma_0}_n)^*&=&
 \left[(\hat v_n\cdot q^{\sigma_0*}_n)e^{-itN\sigma_0\omega_n}\right]^*
 =
 (\hat v^*_n\cdot q^{\sigma_0}_n)e^{itN\sigma_0\omega_n}\\
 &=&
  (\hat v_{-n}\cdot (q^{-\sigma_0}_{-n})^*)e^{-itN(-\sigma_0)\omega_{-n}}= c_{-n}^{-\sigma_0}.
 \end{eqnarray*}

Moreover, from one hand, a direct calculation using H\"older's then young's inequalities shows that
\begin{multline}\label{aprioriestimate2}
\left|\sum_n\bar B^{\pm 1}_n(c^{0},c^{\pm 1})|n|^2c^{\mp 1}_{-n}\right|+
\left|\sum_n\bar B^{\pm 1}_n(c^{\pm 1},c^{0})|n|^2c^{\mp 1}_{-n}\right|\\
\leq
C\|c^{\mp 1}\|_{\ell^2_2}\left(\|c^{\pm 1}\|_{\ell^0_2}\|c^{0}\|_{\ell^1_1}+ \|c^{\pm 1}\|_{\ell^1_2}\|c^{0}\|_{\ell^0_1}   \right)
\leq \epsilon\|c^{\mp 1}\|_{\ell_2^2}^2+
C\left(\|c^{\pm 1}\|_{\ell^0_2}^2\|c^{0}\|_{\ell^1_1}^2+ \|c^{\pm 1}\|_{\ell^1_2}^2\|c^{0}\|_{\ell^0_1}^2   \right)
\end{multline}
for sufficiently small $\epsilon>0$.
On the other hand, we  show that

\begin{equation}\label{skew1}
\left|\sum_n\bar B^{\pm 1}_n(c^{\pm 1},c^{\pm 1})|n|^2c^{\mp 1}_{-n}\right|
\leq
\sum_n\sum_{-n=k+m}|m||c^{\pm 1}_m||k||c^{\pm 1 }_k||n||c^{\mp 1}_n|.
\end{equation}
Using  identities  $\alpha\times(\alpha\times \beta)=-|\alpha|^2\beta$ and  $(\alpha\cdot (\beta\times\gamma))=((\alpha\times\beta)\cdot\gamma)$,  we see that
\begin{eqnarray*}
\sum_n\bar B^{\pm 1}_n(c^{\pm 1},c^{\pm 1})|n|^2c^{\mp 1}_{-n}
&=&
\sum_n\sum_{-n=k+m}(im\cdot q^{\pm 1}_k)\left(q^{\pm 1}_m\cdot (in\times (in \times q^{\mp 1}_n))\right)c^{\pm 1}_kc^{\pm 1}_mc^{\mp 1}_{n}\\
&=&
\sum_n\sum_{-n=k+m}(im\cdot q^{\pm 1}_k)\left((q^{\pm 1}_m\times in)\cdot( in \times q^{\mp 1}_n)\right)c^{\pm 1}_kc^{\pm 1}_mc^{\mp 1}_{n}\\
&=&
-\sum_n\sum_{-n=k+m}(im\cdot q^{\pm 1}_k)(q^{\pm 1}_m\times ik)( in \times q^{\pm 1}_n)c^{\mp 1}_kc^{\pm 1}_mc^{\mp 1}_{n}\\
& &
-\sum_n\sum_{-n=k+m}(im\cdot q^{\pm 1}_k)(q^{\pm 1}_m\times im)( in \times q^{\pm 1}_n)c^{\mp 1}_kc^{\pm 1}_mc^{\mp 1}_{n}.
\end{eqnarray*}
By the skew-symmetry, namely,
\begin{eqnarray*}
\sum_n\sum_{-n=k+m}(im\cdot q^{\pm 1}_k)(q^{\pm 1}_m\times im)( in \times q^{\mp 1}_n)c^{\pm 1}_kc^{\pm 1}_mc^{\mp 1}_{n}&=&\\
-\sum_n\sum_{-n=k+m}(in\cdot q^{\pm 1}_k)(q^{\pm 1}_m\times im)( in \times q^{\mp 1}_n)c^{\pm 1}_kc^{\pm 1}_mc^{\mp 1}_{n}&=&\\
-\sum_m\sum_{-m=k+n}(in\cdot q^{\pm 1}_k)(q^{\pm 1}_m\times im)( in \times q^{\mp 1}_n)c^{\pm 1}_kc^{\pm 1}_mc^{\mp 1}_{n},& &\\
\end{eqnarray*}
 the second term disappears which obviously leads to (\ref{skew1}).
Now to have the {\it \`a priori} bound of $c^{\pm 1}$ in $\ell^1_2$, we apply a smoothing via time averaging effect due to \cite{BMN2}.

\begin{prop}\label{restconvprop}{Restricted convolution}\cite[Theorem 3.1 and Lemma 3.1]{BMN2}
Assume that  the following holds for $|n|_h\neq0$

\begin{equation}\label{restconv}
\sup_n\sum_{k:k+m+n=0,k\in \Sigma_i}\chi(n,k,m)|k|^{-1}\leq C2^{i}
\end{equation}
for every $i=1,2,\cdots$, where
\begin{equation*}
\Sigma_i:=\{k:2^i\leq |k|\leq 2^{i+1}\},
\quad
\chi(n,k,m)=
\begin{cases}
1\quad\text{if}\quad P_{nkm}(1)=0\\
0\quad\text{if}\quad P_{nkm}(1)\not=0.
\end{cases}
\end{equation*}
Then we have
\begin{eqnarray}\label{apriori0}
\left|\sum_n\bar B^{\pm 1}_n(c^{\pm 1},c^{\pm1})|n|^2c^{\mp 1}_{-n}\right|
&\leq&
C\|c^{\pm 1}\|_{\ell^2_2}\|c^{\pm 1}\|_{\ell^1_2}^2\\
\nonumber
&\leq& C\|c^{\pm 1}\|_{\ell_2^1}^4+\epsilon\|c^{\pm 1}\|_{\ell^2_2}^2.
\nonumber
\end{eqnarray}
\end{prop}
Now, combining (\ref{aprioriestimate2}) and (\ref{apriori0}), we obtain the following estimate on $c^{\pm 1}$ in $\ell^1_2$, namely,
\begin{equation}\label{apply Gronwall}
\|c^{\pm 1}(t)\|^2_{\ell^1_2}\leq \|c^{\pm 1}(0)\|^2_{\ell^1_2}+
C\int_0^t\left(\|c^{\pm 1}(s)\|^4_{\ell^1_2}+\|c^{\pm 1}(s)\|^2_{\ell^1_2}\|c^{0}(s)\|^2_{\ell^1_1}\right)ds.
\end{equation}
Moreover we have the following energy inequality:
\begin{eqnarray}\label{energy ineq}
\|c^{\pm 1}(t)||_{\ell^0_2}&+&\left(\frac{\nu+\kappa}{2}\right)\int_0^t\|c^{\pm 1}(s)\|_{\ell^1_2}ds
\nonumber\\
 &\leq&
\|c^{\pm 1}(t)||_{\ell^0_2}+\|c^0(t)\|_{\ell^0_2}+\int_0^t\left(\left(\frac{\nu+\kappa}{2}\right)\|c^{\pm 1}(s)\|_{\ell^1_2}+\nu\|c^0(s)\|_{\ell^1_2}\right)ds
\\
&\leq& \|c^{\pm 1}(0)\|_{\ell^0_2}^2+\|c^0(0)\|_{\ell^1_2}^2.
\nonumber
\end{eqnarray}

In fact, multiply the first equation of \eqref{c} by $c^{0*}(t)$ and the second one by $c^{\pm 1*}(t)$,
we have \eqref{energy ineq}.
Since all  convection terms disappear due to the skew-symmetry, for example,
\begin{eqnarray*}
\sum_n\left(\bar B_n^1(c^1,c^1)\cdot c^{1*}\right)&=&
-i\sum_n\sum_{-n=k+m,\  -\omega_n^1+\omega^1_k+\omega^1_m=0}(q^1_k\cdot m)(q^1_m\cdot q^{-1}_n)c^1_kc^1_mc^{-1}_n\\
&=&
i\sum_n\sum_{-n=k+m,\  -\omega_n^1+\omega^1_k+\omega^1_m=0}(q^1_k\cdot n)(q^{-1*}_{-m}\cdot q^{-1}_n)c^1_kc^{-1*}_{-m}c^{-1}_n\\
&=&
i\sum_m\sum_{m=k+n,\  \omega_n^{-1}+\omega^1_k-\omega^{-1}_m=0}(q^1_k\cdot n)(q^{-1*}_m\cdot q^{-1}_n)c^1_kc^{-1*}_mc^{-1}_n\\
&=&
-\sum_m\left(\bar B_m^{-1}(c^1,c^{-1})\cdot c^{-1*}\right).
\end{eqnarray*}

To apply  Gronwall's inequality, we need the following definition.
Let $C$ be the positive constant appearing in \eqref{apply Gronwall}. Then define $h$ as
\begin{equation*}
h:=\inf\{h'\in [0,\infty):\int _\tau^{\tau+h'}\|c^{\pm
1}(s)\|^2_{\ell_2^1}ds\leq \frac{1}{2C}\quad\text{for any}\quad
\tau>0\}.
\end{equation*}
Note that $h$ is independent of $N$ and can be chosen  positive thaks to
 \eqref{energy ineq}. From \eqref{apply Gronwall} and
an absorbing argument, we see that
\begin{equation*}
\sup_{0<s\leq t}\|c^{\pm 1}(s)\|^2_{\ell^1_2}\leq 2\|c^{\pm
1}(0)\|^2_{\ell^1_2}+ 2C\int_0^t\sup_{0<s''\leq s'}\|c^{\pm
1}(s'')\|^2_{\ell^1_2}\|c^{0}(s')\|^2_{\ell^1_1}ds'\quad\text{for}\quad
t<h.
\end{equation*}
Then by  Gronwall's inequality,
\begin{equation*}
\sup_{0<s\leq t}\|c^{\pm 1}(s)\|_{\ell_2^1}^2\leq 2\|c^{\pm
1}(0)\|_{\ell_2^1}^2\exp\left(2C\int_0^t
\|c^0(s)\|_{\ell_1^1}^2ds\right)\quad\text{for}\quad t<h.
\end{equation*}
Iterating the same argument, one more time, we obtain
\begin{equation*}
\sup_{0<s\leq t}\|c^{\pm 1}(s)\|_{\ell_2^1}^2\leq 2\|c^{\pm
1}(h)\|_{\ell_2^1}^2\exp\left(2C\int_h^t
\|c^0(s)\|_{\ell_1^1}^2ds\right)\quad\text{for}\quad h\leq t<2h.
\end{equation*}

Note that $\int_0^t\|c^0(s)\|^2_{\ell^1_1}ds$ is always finite for
any fixed $t>0$, since there is a global solution to the 2D
Navier-Stokes equations in $\ell_1^s$-type function spaces. Fixing
 $T>0$ and repeating this argument  finitely many times, we have an {\it \`a
priori} bound of $c^{\pm 1}$ in $\ell^1_2$ over $t\in [0,T]$.
Finally, to use Proposition \ref{restconvprop} on the restricted
convolution, we need to verify that  \eqref{restconv} holds. Observe that
\begin{eqnarray*}
P_{n,k,-n-k}(1)&=&
|n|_h^4| k|^4| n+ k|^4+| n|^4| k|_h^4| n+ k|^4
+| n|^4| k|^4| n+ k|^4_h\\
& &
-2| n|^2| n|^2_h| k|^2| k|^2_h| n+ k|^4
-2| n|^2| n|^2_h| k|^4| n+ k|^2| n+ k|_h^2\\
& &
-2| n|^4| k|^2| k|^2_h| n+ k|^2| n+ k|_h^2\\
&=&
| n|^4_hk^8_3+l.o.t.
\end{eqnarray*}
where $l.o.t.$ stands for lower order terms. Thus, it follows that
$P_{n,k,-n-k}(1)$ is a polynomial of degree eight in $k_3$ with a 
nonzero leading coefficient whenever
 $|n_1|+|n_2|\not
=0$. Then for fixed $k_1$, $k_2$ and $n$, there are at most eight
$k_3$ satisfying $\chi(n,k,-n-k)=1$. Thus,
\begin{eqnarray*}
\sum_{2^i\leq |k|\leq 2^{i+1}}|k|^{-1}\chi(n,k,-n-k)&\leq& \sum_{0\leq |k|_h\leq 2^{i+1},k_3\in\mathbb{R}}|k|^{-1}_h\chi(n,k,-n-k)\\
&\leq& 8\sum_{j=1}^i\sum_{2^j\leq |k|_h\leq 2^{j+1}}|k|^{-1}_h\leq 8\sum_{j=1}^i2^{2(j+1)}2^{-j}\leq C2^i.
\end{eqnarray*}

\end{proof}

\section{Proof of the main theorem}

Before proving the main theorem, we first mention the local
existence result.
 Using estimate  (\ref{desiredestimates}), we obtain a local-in-time unique solution to \eqref{original}
in $C([0,T]:\ell^1(\Lambda))$ as stated in the following lemma.

\begin{lemma}\label{local}
Assume that $c(0):=\{c^{\sigma_0}_n(0)\}_{n\in\Lambda,\sigma_0\in\{-1,0,1\}} \in \ell^1(\Lambda)$
and
$c^{\sigma_0}_0(0)=0$ for $\sigma_0\in\{-1,0,1\}$.
  Then there is a local-in-time unique solution
$c(t)\in C([0,T_L]:\ell^1(\Lambda))$
and
$c^{\sigma_0}_0(t)=0$ for $\sigma_0\in\{-1,0,1\}$
satisfying
\begin{equation}\label{T_L}
T_{L}\geq \frac{C}{\|c(0)\|^2},\quad
    \sup_{0<t<T_{L}}\|c(t)\|\leq 10\|c(0)\|,
\end{equation}
where  $C$ is a positive constant independent of $N$.
\end{lemma}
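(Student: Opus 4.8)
The plan is to set up the mild (integral) formulation of the system \eqref{original} and run a standard Fujita--Kato fixed-point argument in $C([0,T_L]:\ell^1(\Lambda))$, where the crucial point is that the smoothing estimates \eqref{desiredestimates} already provide a $t^{-1/2}$ decay rate on the nonlinear terms $\bar B$ and $\tilde B$ that is \emph{uniform in $N$}. Concretely, I would write the solution as the fixed point of the map
\begin{equation*}
\Phi(c)^{\sigma_0}_n(t)=e^{-\lambda_{\sigma_0}|n|^2 t}c^{\sigma_0}_n(0)+\int_0^t e^{-\lambda_{\sigma_0}|n|^2(t-\tau)}\!\!\!\sum_{(\sigma_1,\sigma_2)}\!\left(\bar B^{\sigma_0}_n(c^{\sigma_1},c^{\sigma_2})+\tilde B^{\sigma_0}_n(N\tau,c^{\sigma_1},c^{\sigma_2})\right)d\tau,
\end{equation*}
where $\lambda_0=\nu$ and $\lambda_{\pm1}=\tfrac{\nu+\kappa}{2}$, and $c^{\sigma_0}_n(0)$ are the prescribed Craya--Herring amplitudes of the initial datum $v_0$. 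The divergence-free and zero-mean constraints $c^{\sigma_0}_0(t)=0$ and $(\hat v_n\cdot\vec n)=0$ are preserved because $\Phi$ is built from the Leray-projected equation, so they propagate from the data.

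\textbf{Key steps.} First I would establish the contraction. Applying $\sum_{n}$ to the Duhamel term and using \eqref{desiredestimates} with the elementary bound $\sup_{n}|n|e^{-\nu|n|^2(t-\tau)}\le C_\nu(t-\tau)^{-1/2}$, the bilinear integral is controlled by
\begin{equation*}
\left\|\int_0^t e^{-\lambda_{\sigma_0}|n|^2(t-\tau)}\bar B+\tilde B\,d\tau\right\|\le C\int_0^t(t-\tau)^{-1/2}\|c(\tau)\|^2\,d\tau\le 2C\,t^{1/2}\sup_{0<\tau<t}\|c(\tau)\|^2,
\end{equation*}
since $\int_0^t(t-\tau)^{-1/2}d\tau=2t^{1/2}$. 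Summing over the finitely many $(\sigma_0,\sigma_1,\sigma_2)\in\{-1,0,1\}^3$ only inflates the constant. Thus on the ball of radius $10\|c(0)\|$ in $C([0,T_L]:\ell^1)$ the map $\Phi$ is a contraction provided $C\,T_L^{1/2}\|c(0)\|$ is small enough, which forces the lifespan bound $T_L\gtrsim\|c(0)\|^{-2}$; a parallel estimate on the difference $\Phi(c)-\Phi(\tilde c)$ gives strict contraction, yielding existence and uniqueness in the ball, and the a priori bound $\sup_{t<T_L}\|c(t)\|\le10\|c(0)\|$ comes directly from the fixed-point estimate once $T_L$ is chosen as in \eqref{T_L}. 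The constant $C$ is manifestly independent of $N$ because the oscillatory factor $\exp(i\omega^\sigma_{nkm}N\tau)$ in $\tilde B$ has modulus one and so does not enter the bound.

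\textbf{Main obstacle.} The one delicate point is the convergence of the $\ell^1$-sums defining $\bar B$ and $\tilde B$ and the validity of the Young-type inequality $\|\sum_{n=k+m}a_kb_m\|_{\ell^1}\le\|a\|_{\ell^1}\|b\|_{\ell^1}$ in the sum-closed-frequency-set framework: one must check that the restricted convolution over $\Lambda$ (using $\Lambda=\{a+b:a,b\in\Lambda\}$) respects the $\ell^1$ structure, and that the geometric coefficients $(q^{\sigma_1}_k\cdot m)(q^{\sigma_2}_m\cdot q^{\sigma_0*}_n)$ are uniformly bounded after the $|n|e^{-\nu|n|^2 t}$ factor has absorbed the one derivative of $m$. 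This is precisely what \eqref{desiredestimates} encodes, so I would invoke it directly rather than re-deriving the kernel bound. The remaining work is routine: verify measurability/continuity in $t$ so that the fixed point lies in $C([0,T_L]:\ell^1)$ rather than merely $L^\infty$, which follows from dominated convergence on the Duhamel integral together with strong continuity of the diagonal heat semigroup $e^{-\lambda|n|^2 t}$ on $\ell^1$.
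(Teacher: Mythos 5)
Your proposal is correct and follows essentially the same route as the paper: the authors likewise write the mild (Duhamel) formulation of \eqref{original}, apply the smoothing estimates \eqref{desiredestimates} to obtain the $C_\nu t^{1/2}\sup_s\|c^{\sigma_1}(s)\|\sup_s\|c^{\sigma_2}(s)\|$ bound (with constant independent of $N$ since the oscillatory factor has modulus one), and then invoke a standard fixed-point argument, citing \cite{GIMM2} for the detailed computation. Your additional remarks on the contraction radius, the lifespan $T_L\gtrsim\|c(0)\|^{-2}$, and the $\ell^1$-convolution inequality on the sum-closed frequency set simply make explicit what the paper leaves to that reference.
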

\begin{proof}
First we recall the mild formulation of  \eqref{original}:

\begin{eqnarray*}
c_n^0(t)&=&e^{-\nu |n|^2t}
c^0_n(0)\\
& & +\sum_{(\sigma_1,\sigma_2)\in\{-1,0,1\}^2}
\int_0^te^{-\nu(t-s)|n|^2} \left(\bar
B^0_n(c^{\sigma_1},c^{\sigma_2})+\tilde B^0_n(Nt,
c^{\sigma_1},c^{\sigma_2})\right)ds \end{eqnarray*}
 and
 \begin{eqnarray*}
c_n^{\sigma_0}(t)&=&e^{-\frac{\nu+\kappa}{2} |n|^2t}
c^{\sigma_0}_n(0)\\
& &
+\sum_{(\sigma_1,\sigma_2)\in \{-1,0,1\}^2}
\int_0^te^{-\frac{\nu+\kappa}{2}(t-s)|n|^2}
     \left(\bar B^{\sigma_0}_n(c^{\sigma_1},c^{\sigma_2})+\tilde B^{\sigma_0}_n(Nt, c^{\sigma_1},c^{\sigma_2})    \right)
ds.
\end{eqnarray*}
 By \eqref{desiredestimates}, we have the estimates
\begin{equation*}
\|c_n^0(t)\|\leq \|c^0_n(0)\|+C_\nu
t^{1/2}\sum_{(\sigma_1,\sigma_2)\in\{-1,0,1\}^2}\left(\sup_{0\leq
s<t}\|c^{\sigma_1}(s)\|\sup_{0\leq s<t}\|c^{\sigma_2}(s)\|\right)
\end{equation*}
 and
\begin{eqnarray*}
\|c_n^{\sigma_0}(t)\|&\leq& \| c^{\sigma_0}_n(0)\|\\
 & &+C_{\left(\frac{\nu+\kappa}{2}\right)}t^{1/2}
\sum_{(\sigma_1,\sigma_2)\in \{-1,0,1\}^2}\left(\sup_{0\leq
s<t}\|c^{\sigma_1}(s)\|\sup_{0\leq s<t}\|c^{\sigma_2}(s)\|\right).
\end{eqnarray*}
These {\it \` a-priori} estimates of $\sup_t\|c^0(t)\|$ and
$\sup_t\|c^{\sigma_0}(t)\|$ give us through a standard fixed point
argument the  existence of a local-in-time unique solution (for the
detailed computation, see \cite{GIMM2} for example).
\end{proof}

Let $b^{\sigma_0}(t)$ be the solution to the limit equations \eqref{c} and $c^{\sigma_0}(t)$ be the solution to the original equation \eqref{original}.
The point is to control, in the $\ell^1$-norm, the remainder term $r^{\sigma_0}_n(t):=c^{\sigma_0}_n(t)-b^{\sigma_0}_n(t)$ ($\sigma_0=-1,0,1$) by the large
parameter $N$.
More precisely, $r^0$ and $r^{\sigma_0}$ satisfy
\begin{equation*}
\partial_t r_n^0(t)=-\nu |n|^2r_n^0(t)+\sum_{(\sigma_1,\sigma_2)\in\{-1,0,1\}^2}\left(\bar B^0_n(r^{\sigma_1},c^{\sigma_2})+
\bar B^0_n(b^{\sigma_1},r^{\sigma_2}) +\tilde B^0_n(Nt,
c^{\sigma_1},c^{\sigma_2})\right)
\end{equation*}
and
\begin{eqnarray*}
\partial_t r^{\sigma_0}_n(t)&=&-\left(\frac{\nu+\kappa}{2} \right)|n|^2r^{\sigma_0}_n(t)\\
& &
+\sum_{(\sigma_1,\sigma_2)\in \{-1,0,1\}^2}\left(\bar B^{\sigma_0}_n(r^{\sigma_1},c^{\sigma_2})+
\bar B^{\sigma_0}_n(b^{\sigma_1},r^{\sigma_2})+
\tilde B^{\sigma_0}_n(Nt, c^{\sigma_1},c^{\sigma_2})    \right),
\end{eqnarray*}
respectively.
Once we control the remainder term in the $\ell^1$-norm, we easily
have the main result by a usual bootstrapping argument (see \cite{Y}
for example). Now we show the following  lemma concerning the
smallness of the reminder term. Let
$b(t):=\{b^{\sigma_0}_n(t)\}_{n\in\Lambda,\sigma_0\in\{-1,0,1\}}$
and
$r(t):=\{r^{\sigma_0}_n(t)\}_{n\in\Lambda,\sigma_0\in\{-1,0,1\}}$.

\begin{lemma}\label{fas}
 For all $\epsilon>0$, there is $N_0>0$ such that $\|r_n(t)\|\leq\epsilon$ for $0<t<T_L$ and
 $|N|>N_0$,
 where $T_L$ is the local existence time (see Lemma \ref{local}).
\end{lemma}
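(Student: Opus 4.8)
The plan is to write the Duhamel (mild) formulation for the remainder and to show that the only genuinely $N$-dependent source, namely the oscillatory term $\tilde B^{\sigma_0}_n(Nt,c^{\sigma_1},c^{\sigma_2})$, produces a forcing whose $\ell^1$-norm tends to $0$ as $|N|\to\infty$, uniformly on $[0,T_L]$. Since the two systems share the same initial data, $r^{\sigma_0}_n(0)=c^{\sigma_0}_n(0)-b^{\sigma_0}_n(0)=0$, and integrating the equations for $r^0$ and $r^{\pm1}$ gives
$$
r^{\sigma_0}_n(t)=\sum_{(\sigma_1,\sigma_2)}\int_0^t e^{-\lambda_{\sigma_0}|n|^2(t-s)}\Big(\bar B^{\sigma_0}_n(r^{\sigma_1},c^{\sigma_2})+\bar B^{\sigma_0}_n(b^{\sigma_1},r^{\sigma_2})+\tilde B^{\sigma_0}_n(Ns,c^{\sigma_1},c^{\sigma_2})\Big)\,ds,
$$
where $\lambda_0=\nu$, $\lambda_{\pm1}=(\nu+\kappa)/2$, and I denote by $F_N(t)$ the contribution of the last (oscillatory) integral.

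First I would bound the two terms that are bilinear in $r$. By Lemma \ref{local} we have $\sup_{[0,T_L]}\|c\|\leq 10\|c(0)\|=:M$, and by Lemma \ref{glo} the limit solution $b$ is continuous, hence bounded on $[0,T_L]$ by some $M'$; applying the bilinear estimate \eqref{desiredestimates} to these two terms yields
$$
\|r(t)\|\leq C(M+M')\int_0^t (t-s)^{-1/2}\|r(s)\|\,ds+\sup_{0\le s\le T_L}\|F_N(s)\|.
$$
Because the kernel $(t-s)^{-1/2}$ is integrable, a singular Gronwall inequality gives $\sup_{[0,T_L]}\|r\|\leq C(M,M',T_L)\,\sup_{[0,T_L]}\|F_N\|$, so everything reduces to proving $\sup_{[0,T_L]}\|F_N\|\to 0$ as $|N|\to\infty$.

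The heart of the argument, and the main obstacle, is this smallness of $F_N$. The difficulty specific to the almost periodic case is that $\Lambda(\gamma)$ may accumulate, so although $\gamma\in\Gamma$ forces every non-resonant phase $\omega^\sigma_{nkm}$ to be nonzero (Lemma \ref{pnkm}), there is in general no uniform spectral gap $\inf|\omega^\sigma_{nkm}|>0$. I would overcome this by a frequency-truncation argument: fix a large $R$ and split each sum defining $\tilde B^{\sigma_0}_n$ into the triples $n=k+m$ with $|n|,|k|,|m|\le R$ and the remaining tail. For the tail, the factor $\exp(i\omega^\sigma_{nkm}Ns)$ has modulus one, so the bilinear estimate \eqref{desiredestimates} applied to the truncated amplitudes, together with the uniform $\ell^1$-bound $\|c(s)\|\le M$, controls the tail contribution to $\|F_N\|$ by a quantity independent of $N$ that vanishes as $R\to\infty$; I choose $R$ so that this is $<\epsilon/2$. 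For the finite part there are only finitely many triples, hence a uniform lower bound $\delta:=\min|\omega^\sigma_{nkm}|>0$; on each such term I integrate $\int_0^t e^{-\lambda_{\sigma_0}|n|^2(t-s)}c^{\sigma_1}_k(s)c^{\sigma_2}_m(s)e^{i\omega^\sigma_{nkm}Ns}\,ds$ by parts in $s$, the boundary terms and the term from $\partial_s\big(e^{-\lambda_{\sigma_0}|n|^2(t-s)}c^{\sigma_1}_kc^{\sigma_2}_m\big)$ being controlled on $[0,T_L]$ via \eqref{original} to bound $\partial_s c$. This produces an overall factor $1/(\delta N)$, so the finite part of $\|F_N\|$ is at most $C_R/(\delta N)$; taking $N_0$ large makes it $<\epsilon/2$. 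Combining the two pieces and absorbing the Gronwall constant (by running the estimate with $\epsilon$ replaced by $\epsilon/C(M,M',T_L)$) gives $\sup_{[0,T_L]}\|r\|<\epsilon$ for all $|N|>N_0$, which is the assertion.
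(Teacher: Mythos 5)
Your overall strategy --- Duhamel formulation for $r$, a singular Gronwall reduction to the oscillatory source $F_N$, a frequency truncation, and an integration by parts in time on the non-resonant phases to gain a factor $1/N$ --- is essentially the paper's proof. The paper's change of variables $y^{\sigma_0}_n(t)=r^{\sigma_0}_n(t)-\tilde{\mathcal R}^{\sigma_0}_n(Nt,\mathcal P_\eta c)$, where $\tilde{\mathcal B}^{\sigma_0}_n$ carries the factor $(iN\omega^\sigma_{nkm})^{-1}$, is exactly your integration by parts written at the level of the differential equation, and the paper's constant $\beta(\eta)$ plays the role of your $1/\delta$.

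There is, however, one concrete gap, and it sits precisely at the point that distinguishes the almost periodic case from the periodic one. You truncate by modulus, keeping the triples with $|n|,|k|,|m|\le R$, and assert that ``there are only finitely many triples, hence a uniform lower bound $\delta=\min|\omega^\sigma_{nkm}|>0$.'' In the almost periodic setting the frequency set $\Lambda(\gamma)$ may have accumulation points, so $\{k\in\Lambda(\gamma):|k|\le R\}$ can be infinite; Lemma \ref{pnkm} only guarantees $\omega^\sigma_{nkm}\neq 0$ triple by triple, and the infimum of $|\omega^\sigma_{nkm}|$ over an infinite (even bounded-in-modulus) family of triples can be $0$, since the phases may accumulate at resonance. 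So your ``finite part'' is not finite and your $\delta$ may vanish, which destroys the $1/(\delta N)$ gain. The repair is what the paper's projection $\mathcal P_\eta$ accomplishes: select a genuinely \emph{finite} subset $\{n_1,\dots,n_{s_\eta}\}\subset\Lambda(\gamma)$ (all of modulus at most $\eta$) carrying all but a prescribed amount of the $\ell^1$ mass of $c(s)$ uniformly for $s\in[0,T_L]$ --- possible because $s\mapsto c(s)$ is continuous into $\ell^1$ on a compact interval, hence its range is uniformly summable --- put the discarded frequencies into the tail, and only then set $\beta(\eta)=\max|\omega^\sigma_{nkm}|^{-1}$ over the resulting finitely many combinations. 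With that modification your argument closes, with the same order of quantifiers as in the paper: first $\eta$ large (independent of $N$) to make the tail small, then $N_0=N_0(\eta)$ large to make the truncated oscillatory part small.
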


\begin{proof}

To simplify the remainder equation, we introduce the following notation.
Let
\begin{eqnarray*}
\bar R^{\sigma_0}_n(r,c,b):&=&\sum_{(\sigma_1,\sigma_2)\in\{-1,0,1\}^2}
\left(\bar B^{\sigma_0}_n(r^{\sigma_1}, c^{\sigma_2})+\bar B^{\sigma_0}_n(b^{\sigma_1}, r^{\sigma_2})
\right),\\
\tilde R^{\sigma_0}_n(Nt,c):&=&\sum_{(\sigma_1,\sigma_2)\in\{-1,0,1\}^2}
\tilde B^{\sigma_0}_n(Nt, c^{\sigma_1}, c^{\sigma_2}).\\
\end{eqnarray*}
We  rewrite the remainder equations as follows:
\begin{equation}\label{remainder equation}
\begin{cases}
\partial_tr_n^0(t)=-\nu |n|^2 r^0_n(t)+\bar R^0_n(r,c,b)+\tilde R^0_n(Nt,c)\\
\partial_t r^{\sigma_0}_n(t)=-\left(\frac{\nu+\kappa}{2}\right) |n|^2r^{\sigma_0}_n(t)
+\bar R^{\sigma_0}_n(r,c,b)+\tilde R^{\sigma_0}_n(Nt,c)
\quad\text{for}\quad \sigma_0=-1,1.\\
\end{cases}
\end{equation}

To control $r$, the key is to  control $\tilde R^{0}_n(Nt,c)$ and $\tilde R^{\sigma_0}_n(Nt,c)$ in \eqref{remainder equation}. To do so,
  we need to analyze the following oscillatory
integral of the non-resonant part as follows:
$$
  \tilde{\mathcal {B}}^{\sigma_0}_n(N t,g^{\sigma_1},h^{\sigma_2}):=
\sum_{n=k+m, \omega^\sigma_{nkm}\not=0}\frac{1}{iN\omega^\sigma_{nkm}}e^{iN t\omega^\sigma_{nkm}}
(q_k^{\sigma_1}\cdot im)(q^{\sigma_2}_m\cdot q_n^{\sigma_0*})
g^{\sigma_1}_{k}h^{\sigma_2}_{m}
$$
and
\begin{equation}\label{estimate R}
\tilde {\mathcal R}^{\sigma_0}_n(Nt,c):=\sum_{(\sigma_1,\sigma_2)\in\{-1,0,1\}^2}
\tilde {\mathcal B}^{\sigma_0}(c^{\sigma_1}, c^{\sigma_2}).
\end{equation}

Note that we have the following relation between $\tilde B$ and $\tilde{ \mathcal B}$ ($\tilde R$ and $\tilde{\mathcal R}$):
\begin{eqnarray*}
  \partial_t\left( \tilde{ \mathcal B}^{\sigma_0}_n(N t,
  g^{\sigma_1},h^{\sigma_2})\right)&=&
   \tilde B^{\sigma_0}_n(N t, g^{\sigma_1},h^{\sigma_2})+ \tilde {\mathcal B}^{\sigma_0}_n(N t,
    \partial_tg^{\sigma_1},h^{\sigma_2})\\
    & & +\tilde{\mathcal B}^{\sigma_0}_n(N t, g^{\sigma_1},\partial_th^{\sigma_2})
\end{eqnarray*}
and
\begin{eqnarray}\label{relation}
  \partial_t\left( \tilde{ \mathcal R}^{\sigma_0}_n(N t,
   c )\right)&=&
   \tilde R^{\sigma_0}_n(N t, c)+ \tilde {\mathcal R}^{\sigma_0}_n(N t,
    \partial_t c).
    \nonumber
\end{eqnarray}

To control $r$, we split \eqref{remainder equation} into two parts: finitely many terms and small (in $\ell^1(\Lambda)$) remainder terms, respectively (cf. \cite[Theorem
6.3]{BMN1}).
 For $\eta=1,2,\cdots$, we choose
$\{s_j\}_{j=1}^\infty\subset\mathbb N$ $(s_1<s_2<\cdots )$ in order
to satisfy $\|(I-\mathcal P_\eta)r\|\to 0\quad (\eta\to\infty)$,
where
\begin{eqnarray*}
  \mathcal P_\eta r&:=&\bigg\{r_{n_1},r_{n_2},\cdots, r_{n_{s_\eta}}: \\
 & &n_1,\cdots,n_{s_\eta}\in\Lambda:
n_k\not=n_\ell\ (k\not=\ell), |n_j|\leq \eta\quad\text{for all}\quad
j=1,\cdots,s_\eta\bigg\}.
\end{eqnarray*}

The choice of $n_1$ $\cdots$ $n_{s_\eta}$ is not uniquely
determined, however this does not matter. Then we can divide $r$ into
two parts: finitely many terms $r_{n_1},\cdots, r_{n_{s_\eta}}$ and
small remainder terms
 $\{(I-\mathcal P_\eta)r_n\}_{n\in\Lambda}$.
\begin{remark}\label{beta}
We have the following estimates:
\begin{equation*}
  \|\mathcal P_\eta \tilde {\mathcal{B}}^{\sigma_0}_n  (\mathcal P_\eta c,\mathcal P_\eta c)\|_0
  \leq \frac{\beta(\eta)}{N}(1+\eta^2)^{1/2}\|\mathcal P_\eta c\|_0^2,
\end{equation*}
\begin{equation*}
\|\mathcal P_\eta \bar {R}^{\sigma_0}_n(\mathcal P_\eta y,c,b)\|
\leq (1+\eta^2)^{1/2}\|\mathcal P_\eta y\|(\|c\|+\|b\|)
\end{equation*}
and
\begin{equation*}
  \| |n|^2\mathcal P_\eta y\|\leq
 (1+\eta^2)\|\mathcal P_\eta y\|_0
\end{equation*}
for $0<t<T_L$ ($T_L$ is a local existence time, see \eqref{T_L}), where
\begin{equation*}
  \beta(\eta):=\max\{|\omega^\sigma_{nkm}|^{-1}:k=k_1,\cdots k_{s_\eta},\ n=n_1,\cdots, n_{s_\eta},\ m=n-k\}.
\end{equation*}
Note that $\beta(\eta)$ is always finite, since it only have finite
combinations for the choice of $n$, $k$ and $m$. We can also have
the same type estimate for $\|\partial_t\mathcal P_\eta c\|$ using
\eqref{c}.
\end{remark}

We use a change of variables to control $\tilde R^0$ and  $\tilde {R}^{\sigma_0}$.
Let us set $y$ as

\begin{equation*}
y^{0}_n(t):= r^{0}_n(t)- \tilde {\mathcal{R}}^{0}_n(N t,\mathcal P_\eta  c)\quad\text{and}\quad
 y^{\sigma_0}_n(t):= r^{\sigma_0}_n(t)- \tilde {\mathcal{R}}^{\sigma_0}_n(N t,\mathcal P_\eta  c).
\end{equation*}
From \eqref{remainder equation}, we see that
\begin{eqnarray*}
\partial_t\left(y^{0}_n+\tilde{\mathcal R}^{0}_n\right)&=&-\nu|n|^2(y^{0}_n+\tilde{\mathcal R}^{0}_n)
+
\bar R^{0}_n(y^{0}+\tilde {\mathcal R}^{0},c,b)\\
& &
+\tilde R^{0}_n(Nt, \mathcal P_\eta c)+\tilde R^{0}_n(Nt, (I-\mathcal P_\eta )c),\\
\partial_t\left(y^{\sigma_0}_n+\tilde{\mathcal R}^{\sigma_0}_n\right)&=&
-\left(\frac{\nu+\kappa}{2}\right)|n|^2(y^{\sigma_0}_n+\tilde{\mathcal
R}^{\sigma_0}_n) + \bar R^{\sigma_0}_n(y^{\sigma_0}+\tilde {\mathcal
R}^{\sigma_0},c,b)\\
& &
+\tilde R^{\sigma_0}_n(Nt, \mathcal P_\eta c) +\tilde
R^{\sigma_0}_n(Nt, (I-\mathcal P_\eta) c).
\end{eqnarray*}
Now we control $\mathcal P_\eta y^0$ and $\mathcal P_\eta y^{\sigma_0}$ for fixed $\eta$. By \eqref{estimate R},
\begin{eqnarray}\label{heat}
\partial_t\mathcal P_\eta y^{0}_n(t)&=&-\nu|n|^2\mathcal P_\eta y^{0}_n
+\mathcal P_\eta \bar { R}^{0}_n(\mathcal P_\eta y,c,b) +E^{0}_n,\\
\partial_t\mathcal P_\eta y^{\sigma_0}_n(t)&=&-\left(\frac{\nu+\kappa}{2}\right)|n|^2\mathcal P_\eta y^{\sigma_0}_n
+\mathcal P_\eta \bar { R}^{\sigma_0}_n(\mathcal P_\eta y,c,b) +E^{\sigma_0}_n,
\nonumber
\end{eqnarray}
where
\begin{eqnarray*}
E^{0}_n:&=&-\mathcal P_\eta \tilde {\mathcal R}^{0}_n(Nt, \mathcal P_\eta \partial_t c)+
\mathcal P_\eta \bar R^{0}_n(\mathcal P_\eta \tilde {\mathcal R}^{0}_n(Nt,\mathcal P_\eta c),c,b)\\
& &-
 \nu|n|^2\mathcal P_\eta \tilde{\mathcal R}^{0}_n(Nt, \mathcal P_\eta c)
 +\mathcal P_\eta\tilde R^0_n(Nt, (I-\mathcal P_\eta)c)\\
 \end{eqnarray*}
 and
 \begin{eqnarray*}
E^{\sigma_0}_n:&=&-\mathcal P_\eta \tilde {\mathcal
R}^{\sigma_0}_n(Nt, \mathcal P_\eta \partial_tc)+
\mathcal P_\eta \bar R^{\sigma_0}_n(\mathcal P_\eta \tilde {\mathcal R}^{\sigma_0}_n(Nt,\mathcal P_\eta c),c,b)\\
& &-
 \left(\frac{\nu+\kappa}{2}\right)|n|^2\mathcal P_\eta \tilde{\mathcal R}^{\sigma_0}_n(Nt, \mathcal P_\eta c)
 +\mathcal P_\eta\tilde R^{\sigma_0}_n(Nt, (I-\mathcal P_\eta)c).
 \end{eqnarray*}
Note that \eqref{heat} are  linear heat type  equations with
external force $E^0$ and $E^{\sigma_0}$. Thus the point is to
control $E^0$ and $E^{\sigma_0}$. By Remark \ref{beta}, we can see
that for any $\epsilon>0$, there is  $\eta_0$  and $N_0$ (depending
on $\eta_0$) such that  if $N>N_0$ and $\eta>\eta_0$, then
$\|E^{\sigma_0}\|<\epsilon$ and $\|E^{\sigma_0}\|<\epsilon$.
Thus we have from \eqref{heat},
\begin{eqnarray*}
\|\mathcal P_\eta y^{0}_n(t)\|&\leq &\int_0^t
\bigg(C_\nu(1+\eta^2)\|\mathcal P_\eta
y^{0}_n(s)\| \\
& &+(1+\eta^2)^{1/2}\|\mathcal P_\eta y^0(s)\|(\|c(s)\|+\|b(s)\|)
+\epsilon\bigg) ds
\end{eqnarray*}
and
\begin{eqnarray*}
 \|\mathcal P_\eta
y^{\sigma_0}_n(t)\|&\leq &\int_0^t \bigg(
C_{\nu,\kappa}(1+\eta^2)\|\mathcal P_\eta
y^{\sigma_0}_n(s)\|\\
& & +(1+\eta^2)^{1/2}\|\mathcal P_\eta
y^{\sigma_0}(s)\|(\|c(s)\|+\|b(s)\|) +\epsilon\bigg) ds.
 \nonumber
\end{eqnarray*}
By Gronwall's inequality, we have that for any $\epsilon>0$, there
is $\eta_0$ and $N_0$ (depending on $\eta_0$) such that if $\eta>\eta_0$
and $N>N_0$, then $\|\mathcal P_\eta y^0\|<\epsilon$ and $\|\mathcal
P_\eta y^{\sigma_0}\|<\epsilon$ for $0<t<T_L$. Clearly, we can also control
$(I-\mathcal P_\eta )y$ with sufficiently large $\eta$ (independent
of $N$), and $\mathcal P_\eta \tilde {\mathcal R}^{\sigma_0}_n(Nt,
\mathcal P_\eta c)$ with sufficiently large $N$ for fixed $\eta$.
Thus we can control $r$ for sufficiently large $\eta$ and $N$.

\end{proof}

{\bf Acknowledgments.}
The second author  thanks  the Pacific Institute for the Mathematical Sciences
 for support of his presence there during the academic
year 2010/2011.
This paper developed during a stay of the second  author as a PostDoc at the Department of Mathematics and Statistics,
University of Victoria.


\begin{thebibliography}{99}









\bibitem{BMN1}
A. Babin, A. Mahalov and B. Nicolaenko,
\emph{Regularity and integrability of $3$D Euler and Navier-Stokes equations for rotating fluids.}
Asymptot. Anal., 15 (1997), 103--150.


\bibitem{BMN2}
A. Babin, A. Mahalov and B. Nicolaenko,
\emph{Global regularity of the 3D Rotating Navier-Stokes Equations for resonant domains.}
Indiana University Mathematics Journal, 48 (1999), 1133-1176.

\bibitem{BMN3}
A. Babin, A. Mahalov and B. Nicolaenko,
\emph{On the regularity of three-dimensional rotating Euler-Boussinesq equations.}
Mathematical Models and Methods in Applied  Sciences,
9 (1999), 1089--1121.


\bibitem{BMN4}
A. Babin, A. Mahalov, B. Nicolaenko,
\emph{Fast singular oscillating limits of stably-stratified 3D Euler and Navier-Stokes equations and ageostrophic wave fronts.}
 Large-scale atmosphere-ocean dynamics, Cambridge Univ. Press, 1 (2002), 126--201.


\bibitem{BIM}
J. Benameur,  S. Ibrahim, M. Majdoub,
\emph{Asymptotic study of a magneto-hydrodynamic system.} Differential Integral Equations 18 (2005), 299--324.









\bibitem{C}
F. Charve,
\emph{Global well-posedness and asymptotics for a geophysial fluid system.}
Comm. Partial Differential Equations, 29 (2004), 1919--1940.






\bibitem{Co}
C. Corduneanu,
\emph{Almost Periodic Functions.}
Interscience Publishers, New York, (1968).

\bibitem{CDGG}
J. Y. Chemin, B. Desjardins, I. Gallagher and E. Grenier,
\emph{Mathematical geophysics. An introduction to rotating fluids and the Navier-Stokes equations.}
 Oxford Lecture Series in Mathematics and its Applications, 32.
 The Clarendon Press, Oxford University Press, Oxford, (2006).

\bibitem{EM}
P. F. Embid and A. J. Majda,
\emph{Low froude number limiting dynamics for stably stratified flow with small
or finite Rossby numbers.}
Geophys. Astrophys. Fluid Dynamics, 87 (1998), 1--30.



\bibitem{Ga}
I. Gallagher,
\emph{Applications of Schochet's methods to parabolic equations.}
J. Math. Pures Appl.,
77 (1998), 989--1054.

\bibitem{GSR}
I. Gallagher and  L. Saint-Raymond, 
\emph{Weak convergence results for inhomogeneous rotating fluid equations.}
J. Anal. Math., 99 (2006), 134.




\bibitem{GIMM2}
Y. Giga, K. Inui, A. Mahalov and S. Matsui,
\emph{Uniform local solvability for the Navier-Stokes equations with the Coriolis
force.}
Methods Appl. Anal., 12 (2005), 381-393.




\bibitem{GIMS2}
Y. Giga, K. Inui and A. Mahalov and J. Saal,
\emph{Uniform global solvability of the rotating
Navier-Stokes equations for nondecaying initial data.}
Indiana University Mathematics Journal, 57 (2008),  2775-2791.


\bibitem{GJMY}
Y. Giga, H. Jo, A. Mahalov and T. Yoneda,
\emph{On time analyticity of the Navier-Stokes equations in a
rotating frame with spatially almost periodic data.}
 Physica D, 237 (2008), 1422-1428.








\bibitem{GMY}
Y. Giga, A. Mahalov and T. Yoneda,
\emph{On a bound for amplitudes of Navier-Stokes flow
with almost periodic initial data.} to appear in J. Math. Fluid Mech.


\bibitem{KH}
Y. Kimura and J. R. Herring,
\emph{Diffusion in stably stratified turbulence.}
J. Fluid Mech., 328 (1996), 253--269.










\bibitem{Le}
P. G. Lemari$\acute{\text{e}}$-Rieusset,
\emph{Recent developments in the Navier-Stokes problem.}
 Research Notes in Mathematics,
 Chapman and Hall/CRC, Boca Raton, FL, (2002).







\bibitem{P}
M. Paicu \emph{\'Etude asymptotique pour les fluides anisotropes en
rotation rapide dans le cas p\'eriodique.}  J. Math. Pures Appl.,
83 (2004), 163--242.


\bibitem{Po}
H. Poincar\'e,
\emph{Sur la pr\'ecession des corps d\'eformables.}
Bull, Astronomique, 27 (1910), 321.















\bibitem{Y}
T. Yoneda,
\emph{Long-time solvability of the Navier-Stokes equations in a rotating frame with spatially almost periodic large data.}
 Arch. Ration. Mech. Anal., 200 (2011), 225--237.




\end{thebibliography}
\end{document}